\documentclass[11pt]{amsart}

\usepackage{graphicx}
\usepackage{color}
\usepackage{pdfsync}
\usepackage[T1]{fontenc}

\usepackage[utf8]{inputenc}
\usepackage{amsfonts,amssymb,mathrsfs,cite} 
\vfuzz2pt 
\hfuzz2pt 
\newtheorem{thm}{Theorem}[section]

\newtheorem{lemma}[thm]{Lemma}

\theoremstyle{definition}
\newtheorem{definition}[thm]{Definition}

\theoremstyle{remark}

\numberwithin{equation}{section}


\newcommand*\diff{\mathop{}\mathrm{d}}

\newcommand\restr[2]{{
  \left.\kern-\nulldelimiterspace 
  #1 
  \vphantom{\big|} 
  \right|_{#2} 
  }}

\topmargin -1.0cm

\textwidth 16.5cm

\textheight 24cm

\oddsidemargin 0.4cm

\evensidemargin 0.7cm

\evensidemargin 0.9cm

\begin{document}

\title[]
{Martingale solution to stochastic Korteweg - de Vries equation driven by L\'evy noise}

\author[Karczewska]{Anna Karczewska}
\address{Faculty of Mathematics, Computer Science and Econometrics\\ University of Zielona G\'ora, Szafrana 4a, 65-516 Zielona G\'ora, Poland}
 \email{a.karczewska@wmie.uz.zgora.pl} \thanks{}

\author[Szczeci\'nski]{Maciej Szczeci\'nski}
\address{Faculty of Mathematics, Computer Science and Econometrics\\ University of Zielona G\'ora, Szafrana 4a, 65-516 Zielona G\'ora, Poland}
 \email{m.szczecinski@wmie.uz.zgora.pl} \thanks{}

\date{\today}

\subjclass[2010]{35Q53; 60H15; 76D33}

\keywords{Korteweg - de Vries equation, martingale solution, compensated Poisson random measure, L\'evy noise, stochastic fluid dynamics.}


\begin{abstract}
We study stochastic Korteweg - de Vries equation driven by L\'evy noise consisting of the compensated time homogeneous Poisson random measure and a cylindrical Wiener process. We prove the existence of a martingale solution to the equation studied. In proof of the existence theorem we use the Galerkin approximation and several auxiliary results suitable for the problem considered.
\end{abstract}

\maketitle

\section{Introduction} \label{intro}

In the paper we study the stochastic Korteweg - de Vries (for short KdV) equation  with multiplicative noise of L\'evy's type
\begin{equation}\label{Levy}
\begin{cases}
\diff u(t,x) + \big( u_{3x}(t,x) + u(t,x)u_{x}(t,x) \big) \diff t = \int_{Y}F(t,u(t,x);y)\tilde{\eta}(\diff t, \diff y) + \Phi\left(t,u(t,x)\right) \diff W(t) \\
u(0,x) = u_{0}(x).
\end{cases}
\end{equation}

In the deterministic case, the assumption $u(t,x)=0$ for "large" $|x|$ leads to solitonic solutions, whereas the assumption in periodic form $u(t,x)= u(t,x+l)$ leads to periodic solutions, so-called cnoidal waves \cite{Whit,Ding}, where $l$ is the wavelength.

The deterministic Korteweg - de Vries equation \cite{KdV} (for short KdV) has been derived from the set of Eulerian shallow water and long wavelength equations. KdV can model the evolution in time, due to gravity force,  of unidirectional weakly nonlinear waves appearing at the surface of the fluid. KdV corresponds to the case of a constant pressure on the surface of the fluid and an even bottom of the container.  In more realistic physical cases small fluctuations of these quantities can be modelled by an additional random forcing term.

It is  worth to note that KdV equation became a paradigm as weakly dispersive nonlinear wave equation, since it appears naturally as first order approximation in many fields, like fluid dynamics, ion-acoustic waves in plasma, electric currents, propagation of light in fibres and many others,   see, e.g. monographs \cite{Ablow,Ding,DrazJohns,EIGR,Newell,Osborne,Remoissenet,Tao,Whit}.
Therefore it gained enormous interest among physicists, engineers, biologists and mathematicians.


The stochastic KdV equation has been studied extensively, see, e.g.\ \cite{Deb,Deb2,Deb3,Gao,Printems} and \cite{Herman,KSRB}. The mentioned above papers deal with additive and/or multiplicative noise. Some discuss exact solutions to the stochastic KdV equation. However, to the best of our knowledge, there has been no result so far for the stochastic KdV equation driven by L\'evy type noise.

In the paper we extend the results of the existence of martingale solution to that case. We apply and adapt for our purposes the approaches used in \cite{Deb,Gat} and \cite{Mot}.

\section{Existence of martingale solution to KdV} \label{exist}

Let $\left( \Omega, \mathscr{F}, \left\{\mathscr{F}_{t}\right\}_{t\geq 0}, \mathbb{P} \right)$ be a probability space with filtration and $(Y,\mathcal{Y})$ be a measurable space. 

Denote for ~$T<\infty$~ and ~$-\infty <x_1<x_2<\infty$
\begin{itemize}
\item[(i)]$\mathscr{V}$ -- the space of smooth functions $f:[0,T]\times [x_1,x_2]\rightarrow \mathbb{R}$;
\item[(ii)]$H$ -- the closure of  $\mathscr{V}$ in $L^{2}([0,T]\times [x_1,x_2]; \mathbb{R})$;
\item[(iii)]$V_{m}$, $m\geq 1$ -- the closure of $\mathscr{V}$ in $H^{m}([0,T]\times [x_1,x_2];\mathbb{R})$ [in particular $V:=V_{1}$].
\end{itemize}
Moreover, for arbitrary $m>1$ by $U$ we will denote a Hilbert space  fulfilling the following conditions
\begin{itemize}
\item[(U1)] $U\subset V_{m}$;
\item[(U2)] $U$ is dense in $V_{m}$;
\item[(U3)] embedding $U\hookrightarrow V_{m}$ is compact.
\end{itemize}

In (\ref{Levy}), $W(t)$, $t\geq 0$, is a cylindrical Wiener process adapted to filtration $\left\{\mathscr{F}_{t}\right\}_{t\geq 0}$, $\tilde{\eta}$ is a compensated time homogeneous Poisson measure on  $(Y,\mathcal{Y})$ (see definition in Appendix)
with  $\sigma$ - finite intensity measure $\nu$, $u_{0}\in H$ is a deterministic function, $u(\omega,\cdot,\cdot): \mathbb{R}_{+}\times\mathbb{R} \in \mathbb{R}$ is a c\`{a}dl\`{a}g type function for any $\omega\in\Omega$. 

A measurable function $F:[0,T]\times H\times Y \rightarrow H$  fulfils conditions
\begin{itemize}
\item[({\bf F1})] $\int_{Y} \chi_{\left\{ 0 \right\} } \left(F(t,x;y)\right)\nu (\diff y) = 0$ for all $x\in H$ and all $t\in [0,T]$;
\item[({\bf F2})] there exists a constant $L>0$, such that for all $u_{1},u_{2}\in H$ and all $t\in[0,T]$ 
\begin{equation}\label{F2}
\int_{Y} \left|F(t,u_{1};y) - F(t,u_{2};y)\right|^{2}_{H}\nu(\diff y) \leq L\left|u_{1} - u_{2}\right|^{2}_{H} \qquad \mbox{holds;}
\end{equation}
\item[({\bf F3})] there exists $\zeta >0$, such that for all $p\in\left\{1,2,2+\frac{1}{2}\zeta,4+\zeta\right\}$ there exists a constant $C_{p}>0$, such that
\begin{equation}\label{F3}
\int_{Y}\left|F(t,u;y)\right|^{p}_{H}\mu(\diff y) \leq C_{p} (1+\left|u\right|_{h}^{p}), \quad \mbox{for}\quad u\in H, ~t\in [0,T];
\end{equation}
\item[({\bf F4})] for all $v\in\mathscr{V}$ the mapping $F_{v}: L^{2}(0,T;H) \rightarrow L^{2}([0,T]\times Y),dl\otimes \nu;\mathbb{R})$, where $dl\otimes \nu$ denotes the product of Lebesgue measure and the intensity $\nu$,
defined by
\begin{equation}\label{F4}
(F_{v}(u))(t,y) := \left\langle F(t,u(t^{-});y), v \right\rangle_{H}, \quad u\in L^{2}(0,T;H)
\end{equation}
is continuous if the space
$L^{2}(0,T;H)$ is equipped with Fr{\'e}chet topology from space \\$L^{2}(0,T;H_{loc})$.

We assume that a continuous mapping $\Phi[0,T]\times V \rightarrow L_{0}^{2}(L^{2}(\mathbb{R}))$ fulfils conditions
\item[({$\boldmath{\Phi}$\bf 1})] there exists a constant $L_{\Phi}>0$, such that for all $u_{1}, u_{2}\in V$ and  all $t\in [0,T]$ 
\begin{equation}\label{P1}
\left\|\Phi(t,u_{1}) - \Phi(t,u_{2})\right\|^{2}_{L_{0}^{2}(L^{2}(\mathbb{R}))} \leq L_{\Phi} \left\|u_{1} - u_{2}\right\|^{2}_{V} \quad \mbox{holds} ;
\end{equation}
\item[({$\boldmath{\Phi}$\bf 2})] there exist constants $\alpha, \beta, \kappa>0$, such that for all  $u\in V$ and all $t\in [0,T]$ 
\begin{equation}\label{P2}
\begin{aligned}
\min & \left\{ 2 \left\langle (\mathscr{K}_{\lambda}u), u\right\rangle_{H} - \left\|\Phi(t,u)\right\|_{L_{0}^{2}(L^{2}(\mathbb{R}))}^{2}, \quad - \left\|\Phi(t,u)\right\|_{L_{0}^{2}(L^{2}(\mathbb{R}))}^{2}\right\} \\
& \geq \alpha\left|u(x)\right|_{V}^{2} - \beta \left|u(x)\right|_{H} - \kappa ,
\end{aligned}
\end{equation}
holds, where
\begin{equation}\nonumber
\begin{aligned}
(\mathscr{K}_{\lambda}u) & =  u_{3x} + \lambda\, uu_{x} ~\mbox{~for~} ~\lambda\in [0,1] ;
\end{aligned}
\end{equation}
\item[({$\boldmath{\Phi}$\bf 3})] there exists a constant $C_{\Phi}>0$, such that
\begin{equation}\label{P3}
\left\|\Phi(t,u)\right\|_{L_{0}^{2}(L^{2}(\mathbb{R}))}^{2}\le C_{\Phi} \left( \max\left\{\left|u\right|_{V}, \left|u\right|_{H}\right\} +1\right);
\end{equation}
\item[({$\boldmath{\Phi}$\bf 4})] for any $v\in\mathscr{V}$ the mapping $\Phi_{v}: L^{2}(0,T;H) \rightarrow L^{2}([0,T]; L_{0}^{2}(L^{2}(\mathbb{R})))$ given by
\begin{equation}\label{P4}
\left(\Phi_{v}(u)\right)(t) := \left\langle \Phi(t,u(t)), v \right\rangle _{H}
\end{equation}
is continuous, if space $L^{2}(0,T;H)$ is equipped with Fr{\'e}ch{\'e}t topology from $L^{2}(0,T;H_{loc})$.
\end{itemize}

\begin{definition} 
We say that the problem  (\ref{Levy}) has a {\tt  martingale solution} on the interval $[0,T]$, $T<\infty$, if there exists a basis
  $(\bar{\Omega},\bar{\mathscr{F}},\left\{\bar{\mathscr{F}}_{t}\right\}_{t\geq 0}, \bar{\mathbb{P}}, \bar{u}, \bar{\eta}, \left\{\bar{W}_{t}\right\}_{t\geq 0} )$, where
\begin{itemize}
\item[(i)] $(\bar{\Omega},\bar{\mathscr{F}},\left\{\bar{\mathscr{F}}_{t}\right\}_{t\geq 0}, \bar{\mathbb{P}})$ is a probability space with filtration;
\item[(ii)] $\bar{\eta}$ is a homogeneous Poisson random variable on measurable space  $(Y,\mathcal{Y})$ with intensity measure $\nu$;
\item[(iii)] $\left\{\bar{W}_{t}\right\}_{t\geq 0}$ is cylindrical Wiener process adapted to the filtration  $\left\{\mathscr{F}_{t}\right\}_{t\geq 0}$ ;
\item[(iv)] $\left\{u(t,x)\right\}_{t\geq 0}$ is a predictable process adapted to filtration $\left\{\mathscr{F}_{t}\right\}_{t\geq 0}$ with trajectories in 
\begin{equation}\nonumber
 \mathbb{D}(0,T;H)\cap L^{2}(0,T;V) \cap L^{\infty}(0,T;H)\cap L^{2}(0,T;H_{loc})\cap C(0,T;V'_{s,loc}(\mathbb{R}), 
\end{equation}
$s>0$, $\mathbb{P}$ - a.s., such that for all $t\in[0,T]$ and all $v\in V$  the formula 
\begin{equation}\nonumber
\begin{aligned}
& \left\langle \bar{u}(t), v \right\rangle _{H} + \int_{0}^{t} \left\langle \bar{u}_{3x}(s) + \bar{u}(s)\bar{u}_{x}(s), v\right\rangle_{H} \\
& = \left\langle \bar{u}_{0}, v\right\rangle _{H} + \int_{0}^{t}\int_{Y} \left\langle F(s,\bar{u}(s);y), v\right\rangle_{H}\bar{\eta}(\diff s, \diff y) + \left\langle \int_{0}^{t} \Phi(s,\bar{u}(s)) \diff \bar{W}(s), v \right\rangle _{H} ,
\end{aligned}
\end{equation} 
holds $\mathbb{P}$-a.s. 
\end{itemize}
\end{definition}

Now, we are able to formulate the main result of the paper.

\begin{thm}\label{T5.1}
For all  $u_{0}\in H$ and $T>0$ there exists a martingale solution to(\ref{Levy}).
\end{thm}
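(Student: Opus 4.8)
The plan is to follow the classical Galerkin--Faedo scheme combined with a compactness and Skorokhod representation argument adapted to the presence of jumps, in the spirit of \cite{Deb,Gat,Mot}. First I would fix an orthonormal basis $\{e_j\}_{j\ge 1}\subset\mathscr V$ of $H$, put $H_n=\mathrm{span}\{e_1,\dots,e_n\}$ with $P_n$ the orthogonal projection of $H$ onto $H_n$, and solve the finite-dimensional SDE with jumps
\[
\diff u_n(t)=-P_n\big((u_n)_{3x}(t)+u_n(t)(u_n)_x(t)\big)\,\diff t
+\int_Y P_nF(t,u_n(t^-);y)\,\tilde\eta(\diff t,\diff y)+P_n\Phi(t,u_n(t))\,\diff W(t),
\]
$u_n(0)=P_nu_0$; on $H_n$ the only nonlinearity is polynomial in the coordinates, so there is a unique local c\`adl\`ag solution, and the a priori bounds below exclude explosion. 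Those bounds come from applying the It\^o formula for jump processes to $|u_n(t)|_H^{2p}$ (for the exponents $p$ permitted by (\textbf{F3})): since $\langle P_nv,u_n\rangle_H=\langle v,u_n\rangle_H$ for $u_n\in H_n$, the KdV drift contributes exactly $-2\langle\mathscr K_1u_n,u_n\rangle_H$, which together with $\|P_n\Phi(t,u_n)\|^2\le\|\Phi(t,u_n)\|^2$ is dominated by $-\alpha|u_n|_V^2+\beta|u_n|_H+\kappa$ thanks to the dissipativity assumption ($\Phi$2), while the jump part is handled by (\textbf{F1})--(\textbf{F3}) and the Gaussian part by ($\Phi$3). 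Estimating the martingale terms with the Burkholder--Davis--Gundy inequality and closing by Gronwall's lemma yields, uniformly in $n$, $\mathbb E\sup_{t\le T}|u_n(t)|_H^{2p}\le C_p$ and $\mathbb E\int_0^T|u_n(t)|_V^2\,\diff t\le C$.

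Next I would establish tightness on the path space
\[
\mathcal Z:=\mathbb D(0,T;U')\cap L^2_w(0,T;V)\cap L^2(0,T;H_{loc})\cap\mathbb D(0,T;H_w),
\]
where the subscript $w$ denotes the weak topology and $U'$ is the dual of $U$; $\mathcal Z$ is not metrizable but supports Jakubowski's generalisation of the Skorokhod representation theorem. Combining the estimates above with the compact embedding $U\hookrightarrow V_m$ from (U1)--(U3), a deterministic Aubin--Lions-type compactness lemma, and an Aldous-type condition bounding the increments of $u_n$ in $U'$ (estimated from the integral equation via (\textbf{F3}) and ($\Phi$3)), one checks that the laws of $u_n$ form a tight family on $\mathcal Z$; since the laws of $W$ and of the Poisson random measure are fixed, the joint laws of $(u_n,W,\eta)$ are tight on the corresponding product space.

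By Jakubowski's theorem there are then a new stochastic basis $(\bar\Omega,\bar{\mathscr F},\{\bar{\mathscr F}_t\}_{t\ge0},\bar{\mathbb P})$, a subsequence, and processes $(\bar u_n,\bar W_n,\bar\eta_n)$ with the same laws as $(u_n,W,\eta)$ converging $\bar{\mathbb P}$-a.s.\ in $\mathcal Z$ and in the path spaces of the driving noises to a limit $(\bar u,\bar W,\bar\eta)$; a martingale characterisation argument identifies $\bar W$ as a cylindrical Wiener process and $\bar\eta$ as a time homogeneous Poisson random measure with intensity $\nu$. The a.s.\ convergence in $L^2(0,T;H_{loc})$ (hence, along a further subsequence, a.e.\ on $[0,T]\times[x_1,x_2]$) lets one pass to the limit in the nonlinear term $\bar u_n(\bar u_n)_x$; the convergence in $L^2_w(0,T;V)$ handles the linear KdV term, the convergence in $\mathbb D(0,T;H_w)$ yields the pointwise-in-time identity, and the continuity hypotheses (\textbf{F4}) and ($\Phi$4), together with standard stochastic-integral convergence lemmas, identify the limits of $\int_0^\cdot\int_Y\langle F(s,\bar u_n(s^-);y),v\rangle_H\,\tilde{\bar\eta}_n(\diff s,\diff y)$ and $\int_0^\cdot\langle\Phi(s,\bar u_n(s)),v\rangle_H\,\diff\bar W_n(s)$ with the integrals driven by $\bar\eta$ and $\bar W$. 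Passing to the limit in the Galerkin identity shows that $(\bar\Omega,\bar{\mathscr F},\{\bar{\mathscr F}_t\}_{t\ge0},\bar{\mathbb P},\bar u,\bar\eta,\bar W)$ satisfies the weak formulation in the Definition, and the trajectory regularity of $\bar u$ follows from the a priori bounds and the lower semicontinuity of the relevant norms under weak limits.

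The main obstacle lies in the compactness and limit passage of the last two steps. Because the dispersive operator $u\mapsto u_{3x}$ is skew-adjoint, the Galerkin scheme provides no parabolic smoothing, so the only compactness available must be squeezed out of the abstract embedding $U\hookrightarrow V_m$ together with a fractional-in-time bound that --- unlike the classical Kolmogorov argument valid for continuous processes --- has to be cast as an Aldous condition because $u_n$ has jumps. The second difficulty is passing to the limit in the derivative-losing nonlinearity $u_n(u_n)_x$ and in the stochastic integral against the compensated Poisson random measure $\tilde{\bar\eta}_n$ on the new space; it is precisely here that the continuity assumptions (\textbf{F4}) and ($\Phi$4) with respect to the local $L^2$ (Fr\'echet) topology are essential.
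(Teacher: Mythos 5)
Your proposal follows essentially the same route as the paper: Galerkin approximation, It\^o-formula a priori estimates in $H$ and $V$ closed by Burkholder--Davis--Gundy and Gronwall, tightness via an Aldous condition in $U'$ together with the compactness criterion of \cite{Mot}, a Jakubowski--Skorokhod representation of a convergent subsequence, and term-by-term limit passage using (\textbf{F4}) and ($\Phi$4). The only notable difference is that the paper inserts the cutoff $\theta\left(\left|u^{m}_{x}\right|/m\right)$ into the nonlinearity so that the finite-dimensional system has globally Lipschitz coefficients and the Ikeda--Watanabe existence theorem applies directly, whereas you rely on local solvability of the polynomial system plus non-explosion from the a priori bounds --- a standard and equally valid alternative.
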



\begin{proof}
We construct the Galerkin approximation of the equation (\ref{Levy}).

Let $(e_{i})_{i\in\mathbb{N}}$ be an orthonormal basis in  $H$ and let $P_{m}$, $m\in\mathbb{N}$ be an orthogonal projection on $m$-dimensional space  $Sp(e_{0},...,e_{m})$. Consider initial value problem in $P_{m}H$
\begin{equation}\label{LevyG}
\begin{cases}
& \diff u^{ m}(t,x) + \Big[ u^{ m}_{3x}(t,x) + \theta\left(\frac{\left|u^{ m}_{x}(t,x)\right|}{m} \right) u^{ m}(t,x)u^{ m}_{x}(t,x) \Big]  \diff t \\
& = \int_{Y} P_{m}F(t,u(s^{-});y)\tilde{\eta}(\diff s, \diff y) + P_{m}(\Phi\left(s,u^{ m})) \diff W^{m}(t) \right.\\
& u^{ m}(0,x) = P_{m}u_{0}(x),
\end{cases}
\end{equation}
where $P_{m}L^{2}(\mathbb{R})\ni u^{ m}(t,x) = P_{m}u(t,x)$, $\sup_{m\in\mathbb{N}}\mathbb{E}\left|u^{  m}\right|^{2p} < \infty$ for all $p\geq 2$ and $\theta\in C^{\infty}(\mathbb{R})$ fulfils conditions
\begin{equation}\label{theta}
\begin{aligned}
&\theta(\xi)=1,  & \xi\in [0,\frac{m}{2}]; \\
&\theta(\xi)\in[0,1], & \xi\in [\frac{m}{2},m]; \\
&\theta(\xi)=0, & \xi > m.
\end{aligned}
\end{equation}

\begin{lemma}\label{L5.2} 
For all  $m\in\mathbb{N}$ there exists a c\`{a}dl\`{a}g process $\left\{u^{m}(t)\right\}_{t\in[0,T]}$  adapted to the filtration  $\left\{F_{t}\right\}_{t\geq 0}$ which is a martingale solution to  (\ref{LevyG}).
\end{lemma}

\begin{lemma}\label{L5.3}For all  
 $p\in[\frac{1}{2}, 2+\varsigma]$ there exist such constants $\tilde{C}_{1}(p)$, $\tilde{C}_{2}$, that
\begin{equation}\label{LevyC1}
\sup_{m\geq 1}\mathbb{E}\left(\sup_{0\leq s\leq T}\left|u^{m}(s)\right|^{2p}_{H}\right) \leq \tilde{C}_{1}(p),
\end{equation}
\begin{equation}\label{LevyC2}
\sup_{m\geq 1}\mathbb{E}\left( \int_{0}^{T} \left|u^{m}(s)\right|^{2}_{V} \diff s \right) \leq \tilde{C}_{2}.
\end{equation}
\end{lemma}

\begin{lemma}\label{L5.4} The family of distributions 
 $\mathscr{L}(u^{m})$ is tight in $\mathscr{Z}:=L^{2}(0,T;V)\cap L^{2}(0,T;H_{loc})\cap \mathbb{D}(0,T;U')\cap \mathbb{D}(0,T;H)$.
\end{lemma}

For reader's convenience proofs of Lemmas \ref{L5.2}, \ref{L5.3}, and \ref{L5.4} are given in section \ref{sc3}.

\begin{lemma}(\cite[p.~889]{Mot})\label{etaW}
Let $\eta_{m}:=\eta$ and $W_{m}:=W$, $m\in\mathbb{N}$. Then the following conditions hold
\begin{itemize}
\item[(i)] the family $\left\{\mathscr{L}(\eta_{m})\right\}_{m\in\mathbb{N}}$ is tight in $M_{\mathbb{N}}([0,t]\times Y)$;
\item[(ii)] the family $\left\{\mathscr{{L}(W_{m}})\right\}_{m\in\mathbb{N}}$  is tight in $C(0,T;\mathbb{R})$.
\end{itemize}
\end{lemma}

Due to Lemmas \ref{L5.4} and \ref{etaW} the family of distributions $\mathscr{L}(u^{m},\eta_{n},W_{n})$ is tight in $\bar{\mathscr{Z}}\times M_{\mathbb{N}}([0,t]\times Y) \times C(0,T;\mathbb{R})$. Then due to 
Corollary 7.3 in \cite{Mot}
there exists the subsequence   $\left\{u^{k}\right\}_{k\in\mathbb{N}}$, probabilistic space $\left(\bar{\Omega}, \bar{\mathscr{F}}, \left\{\bar{F}_{t}\right\}_{t\geq 0}, \bar{\mathbb{P}} \right)$ and such random variables $\left(\bar{u},\bar{\eta},\bar{W}\right)$ and $\left(\bar{u}^{k},\bar{\eta}_{k},\bar{W}_{k}\right)$, $k\in\mathbb{N}$ in this space with values in $\bar{\mathscr{Z}}$, that 
\begin{itemize}
\item[(i)] $\mathscr{L}\left(\left(\bar{u}^{k},\bar{\eta}_{k},\bar{W}_{k}\right)\right) = \mathscr{L}\left(\left(u^{m_{k}},\eta_{m_{k}},W_{m_{k}}\right)\right)$, $k\in\mathbb{N}$;
\item[(ii)] $\left(\bar{u}^{k},\bar{\eta}_{k},\bar{W}_{k}\right) \rightarrow \left(\bar{u},\bar{\eta},\bar{W}\right)$ w $\bar{\mathscr{Z}}$ a.s., when $k\rightarrow\infty$;
\item[(iii)] $\left(\bar{\eta}_{k}(\bar{\omega}),\bar{W}_{k}(\bar{\omega})\right) = \left(\eta_{m_{k}}(\bar{\omega}),W_{m_{k}}(\bar{\omega})\right)$ for all $\bar{\omega}\in\bar{\Omega}$.
\end{itemize}
Moreover, 
 $\bar{\eta}_{k}$, $k\in\mathbb{N}$ and $\bar{\eta}$ are homogeneous Poisson random measures on $(Y,\mathcal{Y})$ with intensity measure $\nu$ and $\bar{W}_{k}$, $k\in\mathbb{N}$, and $\bar{W}$ are cylindrical Wiener processes and $\bar{u}^{k} \rightarrow \bar{u}$, $\mathbb{P}$-a.s. 

Since distributions $\bar{u}^{k}$ and $u^{m_{k}}$ are identical, then due to Lemma\ref{L5.3} for all $p\in[\frac{1}{2}, 2+\varsigma]$  
\begin{equation}\label{LevyC1a}
\sup_{m\geq 1}\bar{\mathbb{E}}\left(\sup_{0\leq s\leq T}\left|\bar{u}^{m}(s)\right|^{2p}_{H}\right) \leq \tilde{C}_{1}(p)
\end{equation}
and
\begin{equation}\label{LevyC2a}
\sup_{m\geq 1}\bar{\mathbb{E}}\left( \int_{0}^{T} \left|\bar{u}^{m}(s)\right|^{2}_{V} \diff s \right) \leq \tilde{C}_{2}.
\end{equation} 

Denote
\begin{equation}\label{M}
\begin{aligned}
M^{m_{k}} (t) := & u^{m_{k}} - u_{0}^{m_{k}}  + \int_{0}^{t} \left[ u^{ m_{k}}_{3x}(s) + \theta\left(\frac{\left|u^{ m_{k}}_{x}(s)\right|}{m_{k}} \right) u^{m_{k}}(s)u^{m_{k}}_{x}(s)\right] \diff s \\
& - \int_{0}^{t} \int_{Y}  P_{m_{k}}F(t,u(s^{-},x);y)\eta_{m_{k}} (\diff s, \diff y); \\ 
\bar{M}^{k} (t) := & \bar{u}^{k} - \bar{u}_{0}^{k}  + \int_{0}^{t} \left[ \bar{u}^{ k}_{3x}(s) + \theta\left(\frac{\left|\bar{u}^{ k}_{x}(s)\right|}{k} \right) \bar{u}^{k}(s)\bar{u}^{k}_{x}(s) \right] \diff s\\
& - \int_{0}^{t} \int_{Y}  P_{k}F(t,u(s^{-},x);y)\bar{\eta}_{k}(\diff s, \diff y). \\ 
\end{aligned}
\end{equation}
Note that
\begin{equation}
M^{m_{k}} (t) = \int_{0}^{t}\left(\Phi(s, u^{m_{k}}(s))\right)\diff W^{m_{k}}(s),
\end{equation}
then it is a martingale with values in $H$, square integrable, adapted to the  filtration  $\sigma\left\{u^{m_{k}}(s), 0\leq s\leq t \right\}$ with variation 
\begin{equation}\nonumber
\left[M^{m_{k}}\right](t) := \int_{0}^{t} \Phi(s,u^{m_{k}}(s))\left[\Phi(s,u^{m_{k}}(s))\right]^{*} \diff s .
\end{equation}

Substitute in the Doob inequality (e.g., see Theorem~2.2 in \cite{Gaw})  $M_{t} := M^{m_{k}} (t) $ and $p:=2p$.  Then there exists  $K'_{p}$, such that
\begin{equation}\label{szacDoobL}
\mathbb{E}\left[\left(\sup_{t\in[0,T]}\left|M^{m_{k}} (t)\right|^{p}_{H}\right)\right] \leq K_{p}' .
\end{equation}
Let $0\leq s \leq t \leq T$ and let $\varphi$ be a bounded continuous function on $L^{2}(0,s;H_{loc})$ and $a\in H$ be an arbitrary and fixed. Since $M^{m_{k}}$ is a martingale and $\mathscr{L}(\bar{u}^{m_{k}}) = \mathscr{L}(u^{m_{k}})$, then
\begin{equation}\nonumber
\begin{aligned}
\mathbb{E}\left( \left\langle  M^{m_{k}(t)} - M^{m_{k}}(s); a \right\rangle_{\!H} \varphi(u^{m_{k}}(s))\right) = 0, \\
\mathbb{E}\left( \left\langle  \bar{M}^{m_{k}(t)} - \bar{M}^{m_{k}}(s); a \right\rangle_{\!H} \varphi(\bar{u}^{m_{k}}(s))\right) = 0 .
\end{aligned}
\end{equation}

Denote
\begin{equation}\nonumber
\begin{aligned}
\bar{M} (t) :=  \bar{u} - u_{0}  + \int_{0}^{t} \left[  \bar{u}_{3x}(s) +  \bar{u}(s)\bar{u}_{x}(s) \right]  \diff s - \int_{0}^{t} \int_{Y} F(t,\bar{u}(s^{-},x);y)\bar{\eta} (\diff s, \diff y) . 
\end{aligned}
\end{equation}
We will show that
 $P_{m_{k}}\int_{0}^{t} \int_{Y} F(s,\bar{u}^{m_{k}}(s^{-},x);y)\bar{\eta}_{m_{k}} (\diff s, \diff y) \rightarrow \int_{0}^{t} \int_{Y} F(s,\bar{u}(s^{-},x);y)\bar{\eta} (\diff s, \diff y)$, when $m_{k}\rightarrow\infty$. Let  $\nu\in U$ be arbitrary fixed. For all $t\in[0,T]$ we have
\begin{equation}\nonumber
\begin{aligned}
&\int_{0}^{t}\int_{Y}\left|\left\langle \left[ F(s,\bar{u}^{m_{k}}(s^{-});y) - F(s,\bar{u}(s^{-});y)\right]; \nu \right\rangle_H\right|^{2}\diff \nu(y) \diff s \\
= & \int_{0}^{t}\int_{Y} \left| F_{\nu}(\bar{u}^{m_{k}})(s,y) - F_{\nu}(\bar{u})(s,y) \right|^{2} \diff \nu(y)\diff s \\
\leq & \int_{0}^{T}\int_{Y} \left| F_{\nu}(\bar{u}^{m_{k}})(s,y) - F_{\nu}(\bar{u})(s,y) \right|^{2} \diff \nu(y)\diff s \\
= & \left| F_{\nu}(\bar{u}^{m_{k}})(s,y) - F_{\nu}(\bar{u})(s,y) \right|^{2}_{L^{2}([0,T]\times Y;\mathbb{R})} .
\end{aligned}
\end{equation}
In above equation  $F_{\nu}$ is the function defined by (\ref{F4}). Due to condition  (\ref{F2}) there is
\begin{equation}\nonumber
\int_{0}^{t}\int_{Y}\left|\left\langle\left[ F(s,\bar{u}^{m_{k}}(s^{-});y) -   F(s,\bar{u}(s^{-});y)\right]; \nu \right\rangle_{H}\right|^{2} \diff\nu(y)\diff s \leq   \int_{0}^{t} \left| \left\langle\left[ \bar{u}^{m_{k}}(s^{-}) - \bar{u}(s^{-})\right]; \nu \right\rangle_{H} \right|^{2} \diff s
\end{equation}
and since  $\bar{u}^{m_{k}}\rightarrow\bar{u}$ when $m_{k}\rightarrow\infty$,
\begin{equation}\label{Flim}
\int_{0}^{t}\int_{Y}\left|\left\langle\left[ F(s,\bar{u}^{m_{k}}(s^{-});y)- F(s,\bar{u}(s^{-});y)\right]; \nu \right\rangle _{H}\right|^{2} \diff\nu(y)\diff s \rightarrow 0, \quad m_{k}\rightarrow\infty .
\end{equation}
Moreover, by inequalities
 (\ref{F3}) and (\ref{LevyC1}), for arbitrary fixed $t\in[0,T]$, $r\in\left.\left(1,2+\frac{\varsigma}{2}\right]\right.$, $n\in\mathbb{N}$, there exist constants $C_{1}(r),C_{2},C_{3},C_{4}(r)>0$, such that
\begin{equation}\label{FC}
\begin{aligned}
&\bar{\mathbb{E}}\left[\left|\int_{0}^{t}\int_{Y}\left|\left\langle\left[ F(s,\bar{u}^{m_{k}}(s^{-});y) - F(s,\bar{u}(s^{-});y)\right]; \nu \right\rangle _{H}\right|^{2}\diff \nu(y)\diff s\right|^{r}\right] \\
\leq & 2^{r}\left|\nu\right|^{2r}_{H}\bar{\mathbb{E}}\left[\left|\int_{0}^{t}\int_{Y}\left\{\left|F(s,\bar{u}^{m_{k}}(s^{-});y)\right|_{H}^{2} + \left|F(s,\bar{u}(s^{-});y)\right|_{H}^{2}\right\}\diff\nu(y)\diff s\right|^{r}\right] \\
\leq & 2^{r}C_{2}^{r}\left|\nu\right|^{2r}_{H}\bar{\mathbb{E}}\left[\left|\int_{0}^{t}\left\{2+\left|\bar{u}^{m_{k}}(s)\right|_{H}^{2}+\left|\bar{u}(s)\right|^{2}_{H}\right\}\diff s\right|^{r}\right] \\
\leq & C_{3}\left(1+\bar{\mathbb{E}}\left[\sup_{s\in[0,T]}\left|\bar{u}^{m_{k}}(s)\right|^{2r}_{H}\right]\right) \leq c(1+C_{1}(r)) \leq C_{4}(r) .
\end{aligned}
\end{equation}
Due to inequalities (\ref{Flim}) and (\ref{FC}) we have
\begin{equation}\label{EFlim}
\bar{\mathbb{E}}\left[\int_{0}^{t}\int_{Y}\left|\left\langle\left[ F(s,\bar{u}^{m_{k}}(s^{-});y) - F(s,\bar{u}(s^{-});y)\right];\nu\right\rangle_{H}\right|^{2}\diff \nu(y) \diff s\right] \rightarrow 0 ~\mbox{~as~}~ 
 m_{k}\rightarrow\infty .
\end{equation}
Now, take arbitrary fixed  $\tilde{\nu}\in H$ and $\varepsilon > 0$. Since
 $\mathscr{V}$ is tight in $H$, then there exists  $\nu_{\varepsilon}\in\mathscr{V}$, such that $\left|\tilde{\nu} - \nu_{\varepsilon}\right| \leq \varepsilon$. By (\ref{F3}) there exists a constant $C_{5}>0$, such that
\begin{equation}\nonumber
\begin{aligned}
& \int_{0}^{t}\int_{Y}\left|\left\langle\left[ F(s,\bar{u}^{m_{k}}(s^{-});y) - F(s,\bar{u}(s^{-});y)\right]; \tilde{\nu}\right\rangle_{H}\right|^{2}\diff \tilde{\nu}(y)\diff s \\
\leq & 2 \int_{0}^{t}\int_{Y}\left|\left\langle\left[ F(s,\bar{u}^{m_{k}}(s^{-});y) - F(s,\bar{u}(s^{-});y)\right]; (\tilde{\nu}-\nu_{\varepsilon})\right\rangle_{H}\right|^{2}\diff \tilde{\nu}(y)\diff s \\
& + 2 \int_{0}^{t}\int_{Y}\left|\left\langle\left[ F(s,\bar{u}^{m_{k}}(s^{-});y) - F(s,\bar{u}(s^{-});y)\right]; \nu_{\varepsilon}\right\rangle_{H}\right|^{2}\diff \tilde{\nu}(y)\diff s \\
\leq & 4C_{5}\varepsilon^{2} \int_{0}^{t} \left\{2+\left|\bar{u}^{m_{k}}(s)\right|^{2}_{H} + \left|\bar{u}\right|^{2}_{H}\right\}\diff s \\
& + 2 \int_{0}^{t}\int_{Y}\left|\left\langle\left[ F(s,\bar{u}^{m_{k}}(s^{-});y) - F(s,\bar{u}(s^{-});y)\right]; \nu_{\varepsilon}\right\rangle_{H}\right|^{2}\diff \tilde{\nu}(y)\diff s ,
\end{aligned}
\end{equation}
so, due to (\ref{LevyC1}) there exist constants $C_{6},C_{7}>0$, such that
\begin{equation}\label{pomLevy}
\begin{aligned}
& \bar{\mathbb{E}}\left[\int_{0}^{t}\int_{Y}\left|\left\langle\left[ F(s,\bar{u}^{m_{k}}(s^{-});y) - F(s,\bar{u}(s^{-});y)\right]; \tilde{\nu}\right\rangle_{H}\right|^{2}\diff \tilde{\nu}(y)\diff s\right] \\
\leq & 4C_{5}\varepsilon^{2} \bar{\mathbb{E}}\left[\int_{0}^{t} \left\{2+\left|\bar{u}^{m_{k}}(s)\right|^{2}_{H} + \left|\bar{u}\right|^{2}_{H}\right\}\diff s\right] \\
& + 2 \bar{\mathbb{E}}\left[ \int_{0}^{t}\int_{Y}\left|\left\langle\left[ F(s,\bar{u}^{m_{k}}(s^{-});y) - F(s,\bar{u}(s^{-});y)\right]; \nu_{\varepsilon}\right\rangle_{H}\right|^{2}\diff \tilde{\nu}(y)\diff s\right] \\
\leq & C_{6}\varepsilon^{2} + 2\bar{\mathbb{E}}\left[ \int_{0}^{t}\int_{Y}\left|\left\langle\left[ F(s,\bar{u}^{m_{k}}(s^{-});y) - F(s,\bar{u}(s^{-});y)\right]; \nu_{\varepsilon}\right\rangle_{H}\right|^{2}\diff \tilde{\nu}(y)\diff s\right].
\end{aligned}
\end{equation}
Taking in (\ref{pomLevy}) $m_k\to\infty$ and using (\ref{EFlim}) one obtains
\begin{equation}\nonumber
\limsup_{m_{k}\rightarrow\infty}\bar{\mathbb{E}}\left[\int_{0}^{t}\int_{Y}\left|\left\langle\left[ F(s,\bar{u}^{m_{k}}(s^{-});y) - F(s,\bar{u}(s^{-});y)\right]; \tilde{\nu}\right\rangle_{H}\right|^{2}\diff \tilde{\nu}(y)\diff s\right] \leq C_{6}\varepsilon^{2} .
\end{equation}
Since  $\varepsilon>0$ was arbitrary, then
\begin{equation}\nonumber
\limsup_{m_{k}\rightarrow\infty}\bar{\mathbb{E}}\left[\int_{0}^{t}\int_{Y}\left|\left\langle\left[ F(s,\bar{u}^{m_{k}}(s^{-});y) - F(s,\bar{u}(s^{-});y)\right]; \tilde{\nu}\right\rangle_{H}\right|^{2}\diff \tilde{\nu}(y)\diff s\right] = 0 ,
\end{equation}
so
\begin{equation}\nonumber
\bar{\mathbb{E}}\left[\int_{0}^{t}\int_{Y}\left|\left\langle\left[ F(s,\bar{u}^{m_{k}}(s^{-});y) - F(s,\bar{u}(s^{-});y)\right]; \tilde{\nu}\right\rangle_{H}\right|^{2}\diff \tilde{\nu}(y)\diff s\right] \rightarrow 0 ~\mbox{~as~}~ 
m_{k}\rightarrow\infty
\end{equation}
and since  $\bar{\eta}_{m_{k}}=\bar{\eta}$,
\begin{equation}\label{limPnF}
\bar{\mathbb{E}}\left[\left|\int_{0}^{t}\int_{Y}\left|\left\langle\left[ P_{m_{k}}F(s,\bar{u}^{m_{k}}(s^{-});y) - F(s,\bar{u}(s^{-});y)\right]; \tilde{\nu}\right\rangle_{H}\right|^{2}\diff \tilde{\bar{\eta}}(\diff s, \diff y)\right|^{2}\right] \rightarrow 0 ~\mbox{~as~}~ 
 m_{k}\rightarrow\infty,
\end{equation}
where $\tilde{\bar{\eta}}$ denotes compensated Poisson random measure  corresponding to $\bar{\eta}$. Using (\ref{F3}) and (\ref{LevyC1}) one obtains 
\begin{equation}\label{szacPnF}
\begin{aligned}
& \bar{\mathbb{E}}\left[\left|\int_{0}^{t}\int_{Y}\left|\left\langle\left[ P_{m_{k}}F(s,\bar{u}^{m_{k}}(s^{-});y) - F(s,\bar{u}(s^{-});y)\right]; \tilde{\nu}\right\rangle_{H}\right|^{2}\diff \tilde{\bar{\eta}}(\diff s, \diff y)\right|^{2}\right] \\
= & \bar{\mathbb{E}}\left[\left|\int_{0}^{t}\int_{Y}\left|\left\langle\left[ P_{m_{k}}F(s,\bar{u}^{m_{k}}(s^{-});y) - F(s,\bar{u}(s^{-});y)\right]; \tilde{\nu}\right\rangle_{H}\right|^{2}\diff \nu(\diff y) \diff s \right|^{2}\right] \\
\leq & 2C_{7}\left|\nu\right|_{H}^{2}\bar{\mathbb{E}}\left[ \int_{0}^{t}\left\{2+\left|\bar{u}^{m_{k}}(s)\right|^{2}_{H} + \left|\bar{u}(s)\right|^{2}_{H}\right\}\diff s \right] \\
\leq & C_{8}\left(1+\bar{\mathbb{E}}\left[\sup_{s\in[0,T]}\left|\bar{u}^{m_{k}}\right|^{2}_{H}\right]\right) \leq C_{9} (1+C_{10}(2))\leq C_{11}
\end{aligned}
\end{equation}
for some $C_{7},C_{8},C_{9},C_{10},C_{11}>0$. Due to (\ref{limPnF}) and (\ref{szacPnF}) we have for all $\nu\in H$
\begin{equation}\label{IMP1}
\int_{0}^{T}\bar{\mathbb{E}}\left[\left|\int_{0}^{t}\int_{Y}\left\langle\left[ P_{m_{k}}F(s,\bar{u}^{m_{k}}(s^{-});y) - F(s,\bar{u}(s^{-});y)\right]; \nu\right\rangle_{H}\tilde{\bar{\eta}}(\diff s, \diff y)\right|^{2}\right]\diff t \rightarrow 0 \mbox{~when~} 
m_{k}\rightarrow\infty.
\end{equation}
This is true for all $\nu\in U$, as well (since $u\in H$).

If $k_{m}\rightarrow\infty$ then $\bar{M}^{k_{m}}(t) \rightarrow \bar{M}(t)$ and $\bar{M}^{k_{m}}(s) \rightarrow \bar{M}(s)$, $\bar{\mathbb{P}}$-a.s.\ and since  $\varphi$ is continuous, $\varphi(\bar{u}^{m_{k}}(s)) \rightarrow \varphi(\bar{u}(s))$, $\mathbb{P}$-a.s.  This and (\ref{IMP1}) gives for $k_{m}\rightarrow\infty$ 
\begin{equation}\nonumber
\mathbb{E}\left( \left\langle\left[ \bar{M}^{m_{k}}(t) - \bar{M}^{m_{k}}(s)\right]; a \right\rangle_{H} \varphi(\bar{u}^{m_{k}}(s))\right)  \rightarrow \mathbb{E}\left( \left\langle\left[  \bar{M}(t) - \bar{M}(s)\right]; a \right\rangle_{H} \varphi(\bar{u}(s))\right). 
\end{equation}
In particular 
\begin{equation}\label{Mpom}
\mathbb{E}\left( \left\langle  \bar{M}^{m_{k}}(t); a \right\rangle_{H} \varphi(\bar{u}^{m_{k}}(s))\right)  \rightarrow \mathbb{E}\left( \left\langle \bar{M}(t); a \right\rangle_{H} \varphi(\bar{u}(s))\right).
\end{equation}

Moreover, from (\ref{IMP1}) and (\ref{Mpom}) for all $a\in U$
\begin{equation}\label{IMP2}
\begin{aligned}
& \int_{0}^{t}\left\langle\left[ \bar{u}^{m_{k}}_{3x}(s) + \bar{u}^{m_{k}}(s)\bar{u}^{m_{k}}_{x}(s)\right] ; a\right\rangle_{H}\diff s \rightarrow & \int_{0}^{t}\left\langle\left[ \bar{u}_{3x}(s) + \bar{u}(s)\bar{u}_{x}(s)\right] ; a\right\rangle_{H}\diff s .
\end{aligned}
\end{equation}
By  \cite[p.~895-898]{Mot},  for all $a\in U$  
\begin{equation}\label{IMP3}
\int_{0}^{T}\bar{\mathbb{E}}\left[\left|\left\langle \int_{0}^{t}\left[P_{m_{k}}\Phi(s,\bar{u}^{m_{k}}(s)) - \Phi(s,\bar{u}(s))\right]\diff\bar{W}(s);a\right\rangle_{H}\right|^{2}\right] \diff t \to 0 ~\mbox{~as~}~ m_{k}\to\infty .
\end{equation}

Let $a\in U$. Denote
\begin{equation}\nonumber
\begin{aligned}
\bar{N}^{m_{k}}(t) := & \left\langle \bar{u}^{m_{k}}(0) ; a \right\rangle_{H} + \int_{0}^{t} \left\langle\left[ \bar{u}^{m_{k}}_{3x}(s) + \bar{u}^{m_{k}}(s)\bar{u}^{m_{k}}_{x}(s)\right] ; a\right\rangle_{H}\diff s \\
& + \int_{0}^{t} \int_{Y} \left\langle  P_{m_{k}}F(s,\bar{u}^{m_{k}}(s^{-});y); a\right\rangle_{H} \bar{\eta}_{m_{k}}(\diff s, \diff y) \\
& + \left\langle \int_{0}^{t} 
 P_{m_{k}}\Phi(s,\bar{u}^{m_{k}}(s)) \diff \bar{W}_{m_{k}}(s); a\right\rangle_{H} ;  \\
N^{m_{k}}(t) := & \left\langle u^{m_{k}}(0) ; a \right\rangle_{H} + \int_{0}^{t} \left\langle\left[ u^{m_{k}}_{3x}(s) + u^{m_{k}}(s)u^{m_{k}}_{x}(s)\right] ; a\right\rangle_{H}\diff s \\
& + \int_{0}^{t} \int_{Y} \left\langle  P_{m_{k}}F(s,u^{m_{k}}(s^{-});y); a\right\rangle_{H} \eta_{m_{k}}(\diff s, \diff y) \\
& + \left\langle \int_{0}^{t} 
P_{m_{k}}\Phi(s,u^{m_{k}}(s)) \diff W_{m_{k}}(s); a\right\rangle_{H} ;  \\
\bar{N}(t) := & \left\langle \bar{u}(0) ; a \right\rangle_{H} + \int_{0}^{t} \left\langle\left[ \bar{u}_{3x}(s) + \bar{u}(s)\bar{u}_{x}(s)\right] ; a\right\rangle_{H}\diff s \\
& + \int_{0}^{t} \int_{Y} \left\langle F(s,\bar{u}(s^{-});y); a\right\rangle_{H} \bar{\eta}(\diff s, \diff y) + \left\langle \int_{0}^{t} 
\Phi(s,\bar{u}(s)) \diff \bar{W}(s); a\right\rangle_{H}  .
\end{aligned}
\end{equation}
Since $u^{m_K}$ is the solution of equation (\ref{LevyG}) for $m:=m_{k}$, then  for all $t\in[0,T]$ and $a\in U$
\begin{equation}\nonumber
\left\langle u^{m_{k}}(t); a \right\rangle_{H} = N^{m_{k}}(t), \quad \mathbb{P}-a.s.
\end{equation}
In particular
\begin{equation}\nonumber
\int_{0}^{T}\mathbb{E}\left[\left|\left\langle u^{m_{k}}(t); a \right\rangle_{H} - N^{m_{k}}(t)\right|^{2}\right] \diff t = 0.
\end{equation}
Because $\mathscr{L}(u^{m_{k}},\eta_{m_{k}},W_{m_{k}}) = \mathscr{L}(u,\eta,W)$, so
\begin{equation}\nonumber
\int_{0}^{T}\bar{\mathbb{E}}\left[\left|\left\langle \bar{u}^{m_{k}}(t); a \right\rangle_{H} - \bar{N}^{m_{k}}(t)\right|^{2}\right] \diff t = 0
\end{equation}
and
\begin{equation}\nonumber
\int_{0}^{T}\bar{\mathbb{E}}\left[\left|\left\langle \bar{u}(t); a \right\rangle_{H} - \bar{N}(t)\right|^{2}\right] \diff t = 0.
\end{equation}
This implies
\begin{equation}\nonumber
\left\langle \bar{u}(t); a \right\rangle_{H} = \bar{N}(t), \quad \bar{\mathbb{P}}-a.s.,
\end{equation}
and also
\begin{equation}\label{fin1}
\begin{aligned}
&\left\langle \bar{u}^{m_{k}}(t); a \right\rangle_{H} - \left\langle \bar{u}^{m_{k}}(0); a \right\rangle_{H} + \int_{0}^{t} \left\langle\left[ u^{m_{k}}_{3x}(s) + u^{m_{k}}(s)u^{m_{k}}_{x}(s)\right] ; a\right\rangle_{H}\diff s \\
& - \int_{0}^{t} \int_{Y} \left\langle F(s,\bar{u}(s^{-});y); a\right\rangle_{H} \bar{\eta}(\diff s, \diff y) - \left\langle \int_{0}^{t} 
\Phi(s,\bar{u}(s)) \diff \bar{W}(s); a\right\rangle_{H}  = 0,
\end{aligned}
\end{equation}
$\bar{\mathbb{P}}$-a.s.\ on $\bar{\Omega}$ and $l$-a.s.\ on $[0,T]$, where $l$ is the Lebesgue measure. Since $\bar{u}$ has values in $\mathscr{Z}$, in particular $\bar{u}\in\mathbb{D}(0,T;H)$, then the function on l.h.s.\ of the inequality 
 (\ref{fin1}) is c\`{a}dl\`{a}g type with respect to $t$. Because two  c\`{a}dl\`{a}g type functions equal for almost all 
 $t\in[0,T]$ have to be equal for all
 $t\in[0,T]$, so for all $t\in[0,t]$ and all $a\in U$
\begin{equation}\label{fin2}
\begin{aligned}
&\left\langle \bar{u}^{m_{k}}(t); a \right\rangle_{H} - \left\langle \bar{u}^{m_{k}}(0); a \right\rangle_{H} + \int_{0}^{t} \left\langle\left[ u^{m_{k}}_{3x}(s) + u^{m_{k}}(s)u^{m_{k}}_{x}(s)\right] ; a\right\rangle_{H}\diff s \\
& - \int_{0}^{t} \int_{Y} \left\langle F(s,\bar{u}(s^{-});y); a\right\rangle_{H} \bar{\eta}(\diff s, \diff y) - \left\langle \int_{0}^{t} 
 \Phi(s,\bar{u}(s)) \diff \bar{W}(s); a\right\rangle_{H}  = 0, \quad \bar{\mathbb{P}}\mbox{-a.s.}
\end{aligned}
\end{equation}
Moreover, since 
 $U$ is dense in $V$, then inequality (\ref{fin2}) holds for all $a\in V$. Then $\bar{u}$ is the required martingale solution of the problem (\ref{Levy}). What finishes the proof of Theorem \ref{T5.1}.

\end{proof}


\section{Proofs of Lemmas \ref{L5.2}, \ref{L5.3} and \ref{L5.4}} \label{sc3}
We start with the following auxiliary result.

\begin{proof}[Proof of Lemma \ref{L5.2}]
\begin{lemma}(\cite{IW}) Begin with initial value problem
\begin{equation}\label{IWLevy}
\begin{cases}
\diff u(t) = \sigma(u(t)) \diff W(t) + b(t,u(t)) \diff t + \int_{Y} F(t,u(t^{-});y)\tilde{\eta}(\diff t, \diff y) \\
u(0)=u_{0},
\end{cases}
\end{equation}
where $\sigma(u(t))$ and $b(t,u(t))$ are continuous, $\tilde{\eta}$ is a homogeneous compensated Poisson random measure on  $(Y,\mathcal{Y})$ with $\sigma$-finite intensity measure $\nu$ and $\mathbb{E}u_{0} < \infty$. Let $\sigma (u)$, $b(u)$ and $F(t,u,y)$ fulfil the condition
\begin{equation}\label{Levycond}
\left|\sigma(u) - \sigma(v)\right|^{2} + \int_{Y}\left|F(t,u,y)-F(t,v,y)\right|^{2}\tilde{\eta}(\diff t, \diff y) + \left\|b(u)-b(v)\right\|^{2} \leq K\left|u-v\right|^{2}
\end{equation}
for some $K>0$ and arbitrary $u,v$. Then  (\ref{IWLevy}) has a martingale solution.
\end{lemma}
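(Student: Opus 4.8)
The plan is to establish existence of a (strong, hence in particular a martingale) solution by the classical Picard iteration scheme adapted to equations with jumps. Fix $T<\infty$ and work in the space $\mathscr{H}_{T}$ of adapted c\`adl\`ag processes $u$ with $\norm{u}_{T}^{2}:=\mathbb{E}\,\sup_{0\le t\le T}\abs{u(t)}^{2}<\infty$. Put $u^{0}(t):=u_{0}$ and define recursively
\begin{equation}\nonumber
u^{n+1}(t) = u_{0} + \int_{0}^{t} b(s,u^{n}(s))\,\diff s + \int_{0}^{t}\sigma(u^{n}(s))\,\diff W(s) + \int_{0}^{t}\!\int_{Y} F(s,u^{n}(s^{-});y)\,\tilde{\eta}(\diff s,\diff y).
\end{equation}
First I would check that the scheme is well posed: from the Lipschitz hypothesis (\ref{Levycond}) together with the continuity of $\sigma,b,F$ and finiteness of the initial moment one obtains a linear growth bound $\abs{\sigma(u)}^{2}+\norm{b(u)}^{2}+\int_{Y}\abs{F(t,u,y)}^{2}\nu(\diff y)\le K'(1+\abs{u}^{2})$, which guarantees that each integral above is a well defined square integrable c\`adl\`ag martingale and, by induction, that $u^{n}\in\mathscr{H}_{T}$ for every $n$ --- using It\^o's isometry for the Wiener integral, the isometry $\mathbb{E}\,\abs{\int_{0}^{t}\!\int_{Y}F\,\tilde{\eta}}^{2}=\mathbb{E}\int_{0}^{t}\!\int_{Y}\abs{F}^{2}\nu(\diff y)\,\diff s$ for the compensated Poisson integral, Cauchy--Schwarz for the drift, and Doob's (or the Burkholder--Davis--Gundy) inequality to pass to the supremum in~$t$.

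The core step is the contraction estimate. Writing $\Delta^{n}(t):=\mathbb{E}\,\sup_{0\le s\le t}\abs{u^{n+1}(s)-u^{n}(s)}^{2}$ and applying the three bounds above to the differences, the Lipschitz condition (\ref{Levycond}) produces a constant $C=C(K,T)$ with $\Delta^{n}(t)\le C\int_{0}^{t}\Delta^{n-1}(s)\,\diff s$ for $0\le t\le T$. Iterating yields $\Delta^{n}(T)\le (CT)^{n}\Delta^{0}(T)/n!$, so $\sum_{n}\big(\Delta^{n}(T)\big)^{1/2}<\infty$; hence $(u^{n})$ is Cauchy in $\mathscr{H}_{T}$ and converges to some $u\in\mathscr{H}_{T}$, which --- after passing to a subsequence converging uniformly a.s.\ --- is adapted and c\`adl\`ag. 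Passing to the limit in the recursion, using dominated convergence controlled by the linear growth bound and the continuity of $b,\sigma,F$ together with the same isometries, shows that $u$ solves (\ref{IWLevy}); such a $u$ is in particular a martingale solution relative to the given basis, $\tilde{\eta}$ and $W$.

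I expect the main technical obstacle to be the treatment of the compensated Poisson integral: one must verify that $F(s,u^{n}(s^{-});\cdot)$ lies in $L^{2}(\diff s\otimes\nu)$ so that the integral is defined (which reduces again to the linear growth consequence of (\ref{Levycond})), apply the isometry and the Burkholder--Davis--Gundy inequality for purely discontinuous martingales to control the running supremum, and, since $\nu$ is only $\sigma$-finite, justify these steps through an exhausting sequence of sets of finite $\nu$-measure. Once the jump term is under control the Wiener and drift terms are handled exactly as in the diffusion case, and Gronwall's lemma closes the argument. Alternatively, if one is content with a weak solution, the same a priori bounds yield tightness of the approximations and one concludes by a Skorokhod argument as in the proof of Theorem~\ref{T5.1}; the fixed point route is shorter here only because the present hypothesis is globally Lipschitz.
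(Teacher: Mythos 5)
The paper does not prove this lemma at all: it is quoted from Ikeda--Watanabe \cite{IW} and used as a black box, the actual content of the paper's argument being only the verification that the truncated Galerkin coefficients satisfy the Lipschitz condition (\ref{Levycond}). Your Picard/successive-approximation argument is the standard textbook proof of such a statement and is essentially the one found in \cite{IW}, so as a reconstruction of the cited result it is the right approach, and it even proves more than is claimed (a strong solution on the given basis, hence a fortiori a martingale solution). Three caveats are worth recording. First, the hypothesis as printed integrates $|F(t,u,y)-F(t,v,y)|^{2}$ against the random measure $\tilde{\eta}(\diff t,\diff y)$, which makes the left-hand side of (\ref{Levycond}) a random variable; you silently and correctly reinterpret it as integration against the intensity $\nu(\diff y)$ (compare condition ({\bf F2}) of the paper), and this reinterpretation should be stated explicitly. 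Second, your linear growth bound does not follow from Lipschitz continuity alone: you also need finiteness of the coefficients at one point, e.g.\ $|\sigma(0)|<\infty$ and $\int_{Y}|F(t,0,y)|^{2}\nu(\diff y)<\infty$, which the lemma as stated does not guarantee (in the paper's application it is supplied by ({\bf F3})). Third, the stated moment assumption $\mathbb{E}u_{0}<\infty$ is too weak for your $L^{2}$ iteration scheme; you need $\mathbb{E}|u_{0}|^{2}<\infty$, which again holds in the application since $u_{0}$ is deterministic. None of these is a structural flaw in your argument, but each is a hypothesis you are using without having it in hand from the statement.
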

Let $m\in\mathbb{N}$, $u,v\in H$ be arbitrary fixed and 
\begin{equation}\nonumber
\begin{aligned}
b(u(t)) & := \theta\left(\frac{u_{x}^{m}(t)}{m}\right)u^{m}(t)u^{m}_{x}(t) ; \\
\sigma(t,u(t)) & := P_{m}\Phi(t,u^{m}(t)); \\
F(t,u(t),y) & := P_{m} F(t,u^{m}(t);y) .
\end{aligned}
\end{equation}
We have
\begin{equation}\label{Gal1}
\begin{aligned}
& \left|\theta\left(\frac{u_{x}^{m}(t)}{m}\right)u^{m}(t)u^{m}_{x}(t) - \theta\left(\frac{v_{x}^{m}(t)}{m}\right)v^{m}(t)v^{m}_{x}(t) \right|_{H}^{2}  \\
= & \left|\theta\left(\frac{u_{x}^{m}(t)}{m}\right)u^{m}(t)u^{m}_{x}(t) - \theta\left(\frac{v_{x}^{m}(t)}{m}\right)v^{m}(t)v^{m}_{x}(t) \right|_{L^{2}(\mathbb{R})}^{2}  \\
= & \int_{\mathbb{R}} \left[\theta\left(\frac{u_{x}^{m}(t)}{m}\right)u^{m}(t)u^{m}_{x}(t) - \theta\left(\frac{v_{x}^{m}(t)}{m}\right)v^{m}(t)v^{m}_{x}(t)\right]^{2} \diff x \\
= & \int_{\mathbb{R}} \left[ \theta\left(\frac{u_{x}^{m}(t)}{m}\right)u^{m}(t)u^{m}_{x}(t) \right]^{2} \diff x + \int_{\mathbb{R}} \left[ \theta\left(\frac{v_{x}^{m}(t)}{m}\right)v^{m}(t)v^{m}_{x}(t) \right]^{2} \diff x \\
& - \int_{\mathbb{R}} 2 \left[ \theta\left(\frac{u_{x}^{m}(t)}{m}\right)u^{m}(t)u^{m}_{x}(t)\right] \left[\theta\left(\frac{v_{x}^{m}(t)}{m}\right)v^{m}(t)v^{m}_{x}(t)\right] \diff x \\
\leq & m^{4} \int_{\mathbb{R}} \left[ u^{m}(t) \right]^{2} \diff x + m^{4} \int_{\mathbb{R}} \left[ v^{m}(t) \right]^{2} \diff x - m^{4} \int_{\mathbb{R}} 2 u^{m}(t)v^{m}(t) \diff x \\
= & m^{4} \int_{\mathbb{R}} \left[ u^{m}(t) - v^{m}(t)\right]^{2} \diff x = m^{4} \left|u^{m}(t) - v^{m}(t)\right|_{L^{2}(\mathbb{R})} = m^{4} \left|u^{m}(t) - v^{m}(t)\right|_{H} .
\end{aligned}
\end{equation}
Moreover, due to conditions  (\ref{F2}) and (\ref{P1}), there exist constants $L_{F},L_{\Phi}, C_{1}>0$, such that
\begin{equation}\label{Gal2}
\begin{aligned}
& \int_{Y}\left|F(t,u^{m}(t);y) - F(t,v^{m}(t);y)\right|^{2}_{H} \tilde{\eta}(\diff t, \diff y) + \left\|\Phi(t,u^{m}(t)) - \Phi(t,v^{m}(t))\right\|^{2}_{L_{0}^{2}(H)} \\
\leq &  L_{F} \left| u^{m}(t) - v^{m}(t) \right|_{H} + L_{\Phi} \left| u^{m}(t) - v^{m}(t) \right|_{V} \leq L_{F} \left| u^{m}(t) - v^{m}(t) \right|_{H} + L_{\Phi}C_{1} \left| u^{m}(t) - v^{m}(t) \right|_{H} .
\end{aligned}
\end{equation}
Addition inequalities  (\ref{Gal1}) and (\ref{Gal2}) yield
\begin{equation}\nonumber
\begin{aligned}
& \left|\theta\left(\frac{u_{x}^{m}(t)}{m}\right)u^{m}(t)u^{m}_{x}(t) - \theta\left(\frac{v_{x}^{m}(t)}{m}\right)v^{m}(t)v^{m}_{x}(t) \right|_{H}^{2} \\
&+ \int_{Y}\left|F(t,u^{m}(t);y) - F(t,v^{m}(t);y)\right|^{2}_{H} \tilde{\eta}(\diff t, \diff y) 
 + \left\|\Phi(t,u^{m}(t)) - \Phi(t,v^{m}(t))\right\|^{2}_{L_{0}^{2}(H)} \\
& \leq m^{4} \left|u^{m}(t) - v^{m}(t)\right|_{H} + L_{F} \left| u^{m}(t) - v^{m}(t) \right|_{H} + L_{\Phi}C_{1} \left| u^{m}(t) - v^{m}(t) \right|_{H} \\
& \leq \max\left\{ m^{4}, L_{F}, L_{\Phi}C_{1} \right\} \left|u^{m}(t) - v^{m}(t)\right|_{H} .
\end{aligned}
\end{equation}
This finishes the proof since 
 it is enough to substitute $K:=\max\left\{ m^{4}, L_{F}, L_{\Phi}C_{1} \right\}$ in (\ref{IWLevy}).
\end{proof}

\begin{proof}[Proof of the Lemma \ref{L5.3}]
Denote
\begin{equation}\label{stop}
\tau_{m} := \inf\left\{t\geq 0: \quad \left|u^{m}(t)\right|_{H} \geq R\right\}, \quad m\in\mathbb{N}, R>0 .
\end{equation}
Since every  process $\left\{u^{m}(t)\right\}_{t\in[0,T]}$ is $\mathscr{F}_{t}$-adapted and right-continuous, then $\tau_{m}(R)$ is its stopping time. 
Moreover, since  $\left\{u^{m}(t)\right\}_{t\in[0,T]}$ is c\`{a}dl\`{a}g type, so its trajektories $t\mapsto u^{m}(t)$ are bounded on $[0,T]$, $\mathbb{P}$-a.s.\ and $\tau_{m}\uparrow T$, $\mathbb{P}$-a.s.\ when $R\uparrow\infty$. 

Let $p=1$ or $p=2+\frac{\varsigma}{2}$ and let $\theta:=\theta\left(\frac{\left|u_{x}^{m}(t)\right|}{m}\right)\leq 1$. Applying the It{\^o} formula to function $A(u^{m}(t)) := \left|u^{m}(t)\right|_{H}^{2p}$ one obtains, similarly like in the proof of Lemma~2.4 in \cite{KaSz17}
\begin{equation}\nonumber
\begin{aligned}
& \left|u^{ m}(t\wedge\tau_{m}(R))\right|_{H}^{2p} =  \left|u^{ m}(t\wedge\tau_{m}(R))\right|_{L^{2}(\mathbb{R})}^{2p} \\
=& \left|P_{n}u_{0}\right|^{2p}_{H} +  \int_{0}^{t\wedge\tau_{m}(R)} 2p\left|u^{ m}(s)\right|_{L^{2}(\mathbb{R})}^{2p-2}\left\langle u^{ m}(s) ; \Phi^{m} \big(u^{ m}(s)\big) \diff W^{m}(s) \right\rangle_{H} \\
& - \int_{0}^{t\wedge\tau_{m}(R)} 2p \left|u^{ m}(s)\right|_{L^{2}(\mathbb{R})}^{2p-2} \left\langle u^{ m}(s) ; \mathscr{K}_{\theta}\big( u^{ m}(s) \big) \right\rangle_{H} \diff s \\
& + \int_{0}^{t\wedge\tau_{m}(R)} p^{2} \left|u^{ m}(s)\right|_{L^{2}(\mathbb{R})}^{2p-2} \left| \Phi^{m} \big(u^{ m}(s)\big) \right|^{2}_{L^{2}_{0}(L^{2}(\mathbb{R}))}  \\
& + \int_{0}^{t\wedge\tau_{m}(R)} \int_{Y} \left[ \left|u^{m}(s^{-})+P_{m}F(s,u^{m}(s^{-});y)\right|^{2}_{L^{2}_{0}(L^{2}(\mathbb{R}))} - \left|u^{m}(s^{-})\right|^{2}_{L^{2}_{0}(L^{2}(\mathbb{R}))} \right] \tilde{\eta}(\diff s, \diff y) \\
& + \int_{0}^{t\wedge\tau_{m}(R)} \int_{Y} \left[ \left|u^{m}(s^{-})+P_{m}F(s,u^{m}(s^{-});y)\right|^{2}_{L^{2}_{0}(L^{2}(\mathbb{R}))} - \left|u^{m}(s^{-})\right|^{2}_{L^{2}_{0}(L^{2}(\mathbb{R}))} \right. \\
& \left. - 2p \left|u^{m}(s^{-})\right|^{2p-2}_{L^{2}_{0}(L^{2}(\mathbb{R}))} \left\langle u^{m}(s^{-}); P_{m}F(s,u^{m}(s^{-});y)\right\rangle_{H}\right] \nu (\diff y) \diff s .
\end{aligned}
\end{equation}
Denote
\begin{equation}\nonumber
\begin{aligned}
K^{m}(t) := & \int_{0}^{t\wedge\tau_{m}(R)} 2p\left|u^{ m}(s)\right|_{L^{2}(\mathbb{R})}^{2p-2}\left\langle u^{ m}(s) ; \Phi^{m} \big(u^{ m}(s)\big) \diff W^{m}(s) \right\rangle_{H} \\
& - \int_{0}^{t\wedge\tau_{m}(R)} 2p \left|u^{ m}(s)\right|_{L^{2}(\mathbb{R})}^{2p-2} \left\langle u^{ m}(s); \mathscr{K}_{\theta}\big( u^{ m}(s) \big) \right\rangle_{H} \diff s \\
& + \int_{0}^{t\wedge\tau_{m}(R)} p^{2} \left|u^{ m}(s)\right|_{L^{2}(\mathbb{R})}^{2p-2} \left| \Phi^{m} \big(u^{ m}(s)\big) \right|^{2}_{L^{2}_{0}(L^{2}(\mathbb{R}))} \diff s ; \\
M^{m}(t) := & \int_{0}^{t\wedge\tau_{m}(R)} \int_{Y} \left[ \left|u^{m}(s^{-})+P_{m}F(s,u^{m}(s^{-});y)\right|^{2p}_{L^{2}_{0}(L^{2}(\mathbb{R}))} - \left|u^{m}(s^{-})\right|^{2p}_{L^{2}_{0}(L^{2}(\mathbb{R}))} \right] \tilde{\eta}(\diff s, \diff y); \\
I^{m}(t) := & \int_{0}^{t\wedge\tau_{m}(R)} \int_{Y} \left[ \left|u^{m}(s^{-})+P_{m}F(s,u^{m}(s^{-});y)\right|^{2p}_{L^{2}_{0}(L^{2}(\mathbb{R}))} - \left|u^{m}(s^{-})\right|^{2p}_{L^{2}_{0}(L^{2}(\mathbb{R}))} \right. \\
& \left. - 2p \left|u^{m}(s^{-})\right|^{2p-2}_{L^{2}_{0}(L^{2}(\mathbb{R}))} \left\langle u^{m}(s^{-}), P_{m}F(s,u^{m}(s^{-});y)\right\rangle_{H} \right] \nu (\diff y) \diff s .
\end{aligned}
\end{equation}
We have
\begin{equation}\nonumber
\begin{aligned}
\left|u^{ m}(t\wedge\tau_{m}(R))\right|_{H}^{2p} = & \left|P_{n}u_{0}\right|^{2p}_{H} + K^{m}(t) + M^{m}(t) + I^{m}(t) \\
= & \left|P_{n}u_{0}\right|^{2p}_{H} + \int_{0}^{t\wedge\tau_{m}(R)} 2p\left|u^{ m}(s)\right|_{L^{2}(\mathbb{R})}^{2p-2}\left\langle u^{ m}(s) ; \Phi^{m} \big(u^{ m}(s)\big) \diff W^{m}(s) \right\rangle_{H} \\
& - \int_{0}^{t\wedge\tau_{m}(R)} 2p \left|u^{ m}(s)\right|_{L^{2}(\mathbb{R})}^{2p-2} \left\langle u^{ m}(s); \mathscr{K}_{\theta}\big( u^{ m}(s) \big) \right\rangle_{H} \diff s \\
& + \int_{0}^{t\wedge\tau_{m}(R)} p^{2} \left|u^{ m}(s)\right|_{L^{2}(\mathbb{R})}^{2p-2} \left| \Phi^{m} \big(u^{ m}(s)\big) \right|^{2}_{L^{2}_{0}(L^{2}(\mathbb{R}))} \diff s  + M^{m}(t) + I^{m}(t).
\end{aligned}
\end{equation}


Using condition (\ref{P2}) we obtain
\begin{equation}
\begin{aligned}
\left|u^{ m}(t\wedge\tau_{m}(R))\right|_{L^{2}(\mathbb{R})}^{2p} \leq & \left|P_{n}u_{0}\right|^{2p}_{H} + \int_{0}^{t\wedge\tau_{m}(R)}2p\left|u^{ m}(s)\right|_{L^{2}(\mathbb{R})}^{2p-2} \left\langle u^{ m}(s); \Phi(u^{ m}(s)) \diff W^{m}(s) \right\rangle_{H} \\\!\!
& + \int_{0}^{t\wedge\tau_{m}(R)}  p \left|u^{ m}(s)\right|_{L^{2}(\mathbb{R})}^{2p-2} \left(-\alpha  \left|u^{ m}(s)\right|^{2}_{H^{2}(\mathbb{R})} 
+ \beta \left|u^{ m}(s)\right|_{L^{2}(\mathbb{R})}^{2} +\kappa\right)\!\diff s\\ & + M^{m}(t) + I^{m}(t) ,
\end{aligned} \nonumber 
\end{equation}
where $H^{2}(\mathbb{R})$ denotes Sobolev space. Then
\begin{equation}
\begin{aligned}
\left|u^{ m}(t\wedge\tau_{m}(R))\right|_{L^{2}(\mathbb{R})}^{2p} + & \int_{0}^{t\wedge\tau_{m}(R)} \delta p\left|u^{ m}(s)\right|_{L^{2}(\mathbb{R})}^{2p-2}\left|u^{ m}(s)\right|^{2}_{H^{2}(\mathbb{R})} \diff s  \\ 
\leq & \left|P_{n}u_{0}\right|^{2p}_{H} + \int_{0}^{t\wedge\tau_{m}(R)} 2p\left|u^{ m}(s)\right|_{L^{2}(\mathbb{R})}^{2p-2} \left\langle u^{ m}(s); \Phi(u^{ m}(s)) \diff W^{m}(s)\right\rangle_{H} \\
& + \int_{0}^{t\wedge\tau_{m}(R)} p \left|u^{ m}(s)\right|_{L^{2}(\mathbb{R})}^{2p-2} \left((\delta -\alpha) \left|u^{ m}(s)\right|^{2}_{H^{2}(\mathbb{R})} 
+ \beta \left|u^{ m}(s)\right|_{L^{2}(\mathbb{R})}^{2} + \kappa  \right) \diff s\\ & + M^{m}(t) + I^{m}(t).
\end{aligned}
\end{equation}

Moreover, for any $\varepsilon>0$, 
\begin{equation}\nonumber
\begin{aligned}
\left|u^{ m}(t\wedge\tau_{m}(R))\right|_{L^{2}(\mathbb{R})}^{2p} + & \int_{0}^{t\wedge\tau_{m}(R)} \delta(p-p\varepsilon)\left|u^{ m}(s)\right|_{L^{2}(\mathbb{R})}^{2p-2}\left|u^{ m}(s)\right|^{2}_{H^{2}(\mathbb{R})} \diff s  \\ 
\leq & \left|P_{n}u_{0}\right|^{2p}_{H} + \int_{0}^{t\wedge\tau_{m}(R)} 2p\left|u^{ m}(s)\right|_{L^{2}(\mathbb{R})}^{2p-2} \left\langle u^{ m}(s); \Phi(u^{ m}(s)) \diff W^{m}(s)\right\rangle_{H} \\
& + \int_{0}^{t\wedge\tau_{m}(R)} p \left|u^{ m}(s)\right|_{L^{2}(\mathbb{R})}^{2p-2} \left((\delta -\alpha ) \left|u^{ m}(s)\right|^{2}_{H^{2}(\mathbb{R})} 
 +\beta \left|u^{ m}(s)\right|_{L^{2}(\mathbb{R})}^{2} + \kappa\right) \diff s\\ & + M^{m}(t) + I^{m}(t);
\end{aligned}
\end{equation}
\begin{equation}\label{Lpom1}
\begin{aligned}
\left|u^{ m}(s)\right|_{L^{2}(\mathbb{R})}^{2p} + & \int_{0}^{t\wedge\tau_{m}(R)} \left[p (\alpha  -\varepsilon\delta) \right] \left|u^{ m}(s)\right|_{L^{2}(\mathbb{R})}^{2p-2} \left|u^{ m}(s)\right|^{2}_{H^{2}(\mathbb{R})} \diff s \\
\leq & \left|P_{n}u_{0}\right|^{2p}_{H} + \int_{0}^{t\wedge\tau_{m}(R)} 2p\left|u^{ m}(s)\right|_{L^{2}(\mathbb{R})}^{2p-2} \left\langle u^{ m}(s); \Phi(u^{ m}(s)) \diff W^{m}(s)\right\rangle_{H} \\ 
& + \int_{0}^{t\wedge\tau_{m}(R)} p \left|u^{ m}(s)\right|_{L^{2}(\mathbb{R})}^{2p-2} \left(\beta  \left|u^{ m}(s)\right|_{L^{2}(\mathbb{R})}^{2} + \kappa \right) \diff s + M^{m}(t) + I^{m}(t).
\end{aligned}
\end{equation}

Substitution in the Young inequality (e.g., see inequality 8.3 in \cite{BDG})  $a:=\left|u^{ m}(t,x)\right|_{L^{2}(\mathbb{R})}^{2p-2}$, $b:=\kappa  p$, $r:=\frac{p}{p-1}$, $r'=p$, gives
\begin{equation}\nonumber
\begin{aligned}\frac{\varepsilon}{r} = & \frac{\varepsilon}{\frac{2p}{2p-2}} = \frac{\varepsilon (p-1)}{p}, \\
\frac{r'}{r} = & \frac{p}{\frac{p}{p-1}} = p-1, \\
ab = & \kappa  p \left|u^{ m}(t,x)\right|_{L^{2}(\mathbb{R})}^{2p-2} 
\end{aligned}
\end{equation} 
and 
\begin{equation}\label{LYN}
\begin{aligned}
\kappa  p \left|u^{ m}(t,x)\right|_{L^{2}(\mathbb{R})}^{2p-2}  \leq & \frac{\varepsilon}{\frac{p}{p-1}}\left|u^{ m}(t,x)\right|_{L^{2}(\mathbb{R})}^{2p} + \frac{1}{p\varepsilon^{p-1}}(\kappa  p)^{p} \\
= & C_{1}(\varepsilon, p) \left|u^{ m}(t,x)\right|_{L^{2}(\mathbb{R})}^{2p} + C_{2}(\varepsilon, \kappa , p).
\end{aligned}
\end{equation}

Using (\ref{LYN}) in (\ref{Lpom1}) one gets
\begin{equation}\label{L*}
\begin{aligned}
\left|u^{ m}(t\wedge\tau_{m}(R))\right|_{L^{2}(\mathbb{R})}^{2p} + & \int_{0}^{t\wedge\tau_{m}(R)}\left[p (\alpha  -\varepsilon\delta) \right] \left|u^{ m}(s)\right|_{L^{2}(\mathbb{R})}^{2p-2} \left|u^{ m}(s)\right|^{2}_{H^{2}(\mathbb{R})} \diff s \\
\leq & \left|P_{n}u_{0}\right|^{2p}_{H} + \int_{0}^{t\wedge\tau_{m}(R)} 2p\left|u^{ m}(s)\right|_{L^{2}(\mathbb{R})}^{2p-2} \left\langle u^{ m}(s); \Phi(u^{ m}(s)) \diff W^{m}(s)\right\rangle_{H} \\ 
& + \int_{0}^{t\wedge\tau_{m}(R)} \left(p\beta  + C_{1}(\varepsilon, p)\right)\left|u^{ m}(s)\right|_{L^{2}(\mathbb{R})}^{2p} \diff s + \int_{0}^{t\wedge\tau_{m}(R)} \!\!C_{2}(\varepsilon, \kappa , p) \diff s \\
&+ M^{m}(t) + I^{m}(t) \\
=& \int_{0}^{t\wedge\tau_{m}(R)} 2p\left|u^{ m}(s)\right|_{L^{2}(\mathbb{R})}^{2p-2} \left\langle u^{ m}(s); \Phi(u^{ m}(s)) \diff W^{m}(s)\right\rangle_{H} \\
&+ \int_{0}^{t\wedge\tau_{m}(R)} C_{3}(\varepsilon, \beta , p) \left|u^{ m}(s)\right|_{L^{2}(\mathbb{R})}^{2p} \diff s + \int_{0}^{t\wedge\tau_{m}(R)} C_{2}(\varepsilon, \kappa , p) \diff s\\
&+ M^{m}(t) + I^{m}(t).
\end{aligned}
\end{equation}

Let $\varepsilon < \frac{\alpha}{\delta}$ be arbitrary fixed. Then
\begin{equation}\nonumber
\begin{aligned}
& \left|u^{ m}(t\wedge\tau_{m}(R))\right|_{L^{2}(\mathbb{R})}^{2p} \leq  \left|P_{m}u_{0}(x)\right|_{L^{2}(\mathbb{R})}^{2p} + \int_{0}^{t\wedge\tau_{m}(R)} p \left|u^{ m}(s)\right|_{L^{2}(\mathbb{R})}^{2p-2} \left\langle u^{ m}(s); \Phi(u^{ m}(s)) \diff W^{m}(s)\right\rangle_{H} \\
& + \int_{0}^{t\wedge\tau_{m}(R)} C_{3}(\varepsilon, \beta , p) \left|u^{ m}(s)\right|_{L^{2}(\mathbb{R})}^{2p} \diff s + \int_{0}^{t} C_{2}(\varepsilon, \kappa , p) \diff s + M^{m}(t) + I^{m}(t)
\end{aligned}
\end{equation}
and
\begin{equation}\label{Lszac1}
\begin{aligned}
\mathbb{E} & \left|u^{ m}(t\wedge\tau_{m}(R))\right|_{L^{2}(\mathbb{R})}^{2p} \leq  C_{4} + \mathbb{E} \int_{0}^{t\wedge\tau_{m}(R)} C_{3}(\varepsilon, \beta , p) \left|u^{ m}(s)\right|_{L^{2}(\mathbb{R})}^{2p} \diff s  \\ 
& + \mathbb{E} \int_{0}^{t\wedge\tau_{m}(R)} C_{2}(\varepsilon, \kappa , p) \diff s + \mathbb{E} (M^{m}(t)) + \mathbb{E}(I^{m}(t)). 
\end{aligned}
\end{equation}

In the following part we will use the following result from \cite{Mot}.
\begin{lemma}(\cite[p.~882-883]{Mot}) \label{Taylor} 
For any $p\geq 1$ there exist constants $C_{1}(p), C_{2}(p), C_{3}(p)>0$, such that for aarbitrary  $x,h\in H$ the following inequalities hold
\begin{equation}\nonumber
\begin{aligned}
\Big| \left| x+h \right|^{2p}_{H} - \left|x\right|^{2p}_{H} - 2p \left|x\right|^{2p-2}_{H} \left\langle x,h\right\rangle_{H} \Big| \leq & C_{1}(p) \left(\left|x\right|^{2p-2}_{H} + \left|h\right|^{2p-2}_{H}\right) \left|h\right|^{2p}_{H} ; 
\end{aligned}
\end{equation}
\begin{equation}\nonumber
\begin{aligned}
\left(\left|x+h\right|^{2p}_{H} - \left|x\right|^{2p}_{H}\right)^{2} \leq & 2 \left\{4p^{2}\left|x\right|^{2p-2}_{H}\left|h\right|^{2}_{H} + C_{2}(p)\left(\left|x\right|^{2p-2}_{H} + \left|h\right|^{2p-2}_{H}\right)^{2}\left|h\right|^{4}_{H}\right\} \\
\leq & 4p^{2} \left|x\right|^{4p-2}_{H}\left|h\right|^{2}_{H} + C_{3}(p)\left|x\right|^{4p-4}_{H}\left|h\right|^{4}_{H}+C_{3}(p)\left|h\right|^{4p}_{H}.
\end{aligned}
\end{equation}
\end{lemma}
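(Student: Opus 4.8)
The plan is to reduce both estimates to one--dimensional Taylor and mean--value computations along the segment $t\mapsto x+th$. Fix $x,h\in H$ and set $g(t):=|x+th|_{H}^{2}=|x|_{H}^{2}+2t\langle x,h\rangle_{H}+t^{2}|h|_{H}^{2}$, a nonnegative quadratic in $t$, and $\psi(t):=|x+th|_{H}^{2p}=g(t)^{p}$. The first step I would record is the regularity of $\psi$: since $s\mapsto s^{p}$ is $C^{1}$ on $[0,\infty)$ and $C^{2}$ on $(0,\infty)$, and since at a zero $t_{0}$ of $g$ one has $g(t)\sim|h|_{H}^{2}(t-t_{0})^{2}$ (the quadratic touches $0$ with a double root), the composition $\psi$ is $C^{2}$ on all of $\mathbb{R}$ precisely because $2p\ge 2$. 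This is the one point where the hypothesis $p\ge 1$ is used essentially.

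Next I would differentiate: $\psi'(t)=p\,g(t)^{p-1}g'(t)$ with $g'(t)=2\langle x+th,h\rangle_{H}$, so $\psi'(0)=2p|x|_{H}^{2p-2}\langle x,h\rangle_{H}$, which is exactly the linear term appearing in the first inequality. Differentiating again, $\psi''(t)=p(p-1)g^{p-2}(g')^{2}+p\,g^{p-1}g''$ with $g''=2|h|_{H}^{2}$. The key simplification is Cauchy--Schwarz in the form $(g'(t))^{2}=4\langle x+th,h\rangle_{H}^{2}\le 4g(t)|h|_{H}^{2}$, which folds the two contributions into the single bound
\begin{equation}\nonumber
|\psi''(t)|\le (4p^{2}-2p)\,|x+th|_{H}^{2p-2}\,|h|_{H}^{2}.
\end{equation}

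For the first inequality I would use the exact integral remainder $R_{1}:=\psi(1)-\psi(0)-\psi'(0)=\int_{0}^{1}(1-t)\psi''(t)\diff t$, insert the displayed bound, and estimate $|x+th|_{H}\le|x|_{H}+|h|_{H}$ (legitimate since $2p-2\ge 0$). Pulling the integrand out and applying the elementary inequality $(a+b)^{q}\le 2^{q}(a^{q}+b^{q})$, valid for $q\ge 0$, then produces a right--hand side of the form $C_{1}(p)\,(|x|_{H}^{2p-2}+|h|_{H}^{2p-2})\,|h|_{H}^{2}$, i.e.\ a remainder that is genuinely second order in $h$.

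For the second inequality I would start from the decomposition $|x+h|_{H}^{2p}-|x|_{H}^{2p}=2p|x|_{H}^{2p-2}\langle x,h\rangle_{H}+R_{1}$, square it using $(a+b)^{2}\le 2a^{2}+2b^{2}$, bound the leading term by Cauchy--Schwarz as $\big(2p|x|_{H}^{2p-2}\langle x,h\rangle_{H}\big)^{2}\le 4p^{2}|x|_{H}^{4p-2}|h|_{H}^{2}$, and control $R_{1}^{2}$ by the square of the first inequality. Expanding $(|x|_{H}^{2p-2}+|h|_{H}^{2p-2})^{2}|h|_{H}^{4}$ and treating the mixed term $2|x|_{H}^{2p-2}|h|_{H}^{2p-2}$ with Young's inequality then redistributes it into the advertised $|x|_{H}^{4p-4}|h|_{H}^{4}$ and $|h|_{H}^{4p}$ contributions, giving the stated form. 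The only genuine obstacle is the $C^{2}$ regularity at the single parameter value where the segment passes through the origin; once that is cleared, everything reduces to Cauchy--Schwarz, Young and $(a+b)^{q}$ bookkeeping, whose constants I would track only up to existence, as the statement requires.
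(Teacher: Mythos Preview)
The paper does not give its own proof of this lemma; it is simply quoted from \cite[p.~882--883]{Mot} and used as a black box. Your argument --- parametrise by $t\mapsto x+th$, check $C^{2}$ regularity of $t\mapsto|x+th|_{H}^{2p}$ using that a zero of $g(t)=|x+th|_{H}^{2}$ is necessarily a double root, bound $\psi''$ via Cauchy--Schwarz $(g')^{2}\le 4g|h|_{H}^{2}$, apply the integral Taylor remainder, and then square --- is exactly the standard route and is correct.

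One remark worth making explicit: your computation yields the remainder bound $C_{1}(p)\big(|x|_{H}^{2p-2}+|h|_{H}^{2p-2}\big)|h|_{H}^{2}$, with exponent $2$ on $|h|_{H}$, whereas the statement as printed has $|h|_{H}^{2p}$. The printed exponent is a transcription error (it is inhomogeneous and in fact fails for small $h$ when $p>1$); your version is the correct one and is what is actually used later in the paper. Likewise, the first term in the intermediate line of the second display should read $4p^{2}|x|_{H}^{4p-2}|h|_{H}^{2}$ rather than $4p^{2}|x|_{H}^{2p-2}|h|_{H}^{2}$, again by homogeneity; your derivation produces the right power and matches the final line of the display.
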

Using condition (\ref{F3}), Lemma \ref{Taylor} and (\ref{stop}) for the process $I^{n}(t\wedge\tau_{m}(R))$ one obtains 
\begin{equation}\nonumber
\begin{aligned}
\left|I^{n}(t) \right|:= & \Bigg| \int_{0}^{t\wedge\tau_{m}(R)} \int_{Y} \left[ \left|u^{m}(s^{-})+P_{m}F(s,u^{m}(s^{-});y)\right|^{2p}_{H} - \left|u^{m}(s^{-})\right|^{2p}_{H} \right. \\
& \left. - 2p \left|u^{m}(s^{-})\right|^{2p-2}_{H} \left\langle u^{m}(s^{-}); P_{m}F(s,u^{m}(s^{-});y)\right\rangle_{H}\right] \nu (\diff y) \diff s \Bigg| \\
\leq &  \int_{0}^{t\wedge\tau_{m}(R)} \int_{Y} \left\{C_{4}(p) \left( \left|u^{m}(s)+P_{m}F(s,u^{m}(s);y)\right|^{2p-2}_{H} + \left|P_{m}F(s,u^{m}(s);y)\right|^{2p-2}_{H} \right) \right. \\
& \times \left.\left|P_{m}F(s,u^{m}(s);y)\right|^{2p}_{H} \right\} \nu (\diff y) \diff s \\
\leq & C_{4}(p) \int_{0}^{t} \left\{C_{5} \left|u^{m}(s)\right|^{2p-2}_{H} \left(1+\left|u^{m}(s)\right|^{2}_{H}\right) + C_{6}(p)\left(1+\left|u^{m}(s)\right|^{2p}_{H}\right)\right\} \diff s \\
\leq & C_{7}(p) \int_{0}^{t} \left\{ 1 + \left|u^{m}(s)\right|^{2p}_{H} \right\} \diff s = C_{7}(p)t + C_{7}(p)\int_{0}^{t}\left|u^{m}(s)\right|^{2p}_{H} \diff s
\end{aligned}
\end{equation}
for some $C_{4}(p), C_{5}, C_{6}(p), C_{7}(p) >0$. Then for any $t\in[0,T]$
\begin{equation}\label{EIszac}
\mathbb{E}\left(\left|I^{m}(t)\right|\right) \leq C_{7}(p)t + C_{7}(p) \int_{0}^{t} \mathbb{E} \left( \left|u^{m}(s)\right|^{2p}_{H} \right) \diff s.
\end{equation}

Moreover, due to Lemma
 \ref{Taylor}, condition (\ref{F3}) and (\ref{stop}) the process 
$\left\{M^{m}(t\wedge\tau_{m}(R))\right\}_{t\in[0,T]}$ is an integrable martingale, so
\begin{equation}\label{EMszac}
\mathbb{E}\left( M^{m}(t\wedge\tau_{m}(R)) \right) = 0.
\end{equation}
Insertion of  (\ref{EIszac}) and (\ref{EMszac}) into (\ref{Lszac1}) yields
\begin{equation}\label{lszac2}
\begin{aligned}
\mathbb{E} \left|u^{ m}(t\wedge\tau_{m}(R))\right|_{H}^{2p} \leq & C_{4} + \mathbb{E} \int_{0}^{t\wedge\tau_{m}(R)} C_{3}(\varepsilon, \beta , p) \left|u^{ m}(s)\right|_{H}^{2p} \diff s + \mathbb{E} \int_{0}^{t\wedge\tau_{m}(R)} C_{2}(\varepsilon, \kappa , p) \diff s \\ 
&  + C_{7}(p)t + C_{7}(p) \int_{0}^{t} \mathbb{E} \left( \left|u^{m}(s)\right|^{2p}_{H} \right) \diff s \\
\leq & C_{4} + \mathbb{E} \int_{0}^{T\wedge\tau_{m}(R)} C_{3}(\varepsilon, \beta , p) \left|u^{ m}(s)\right|_{H}^{2p} \diff s + \mathbb{E} \int_{0}^{t\wedge\tau_{m}(R)} C_{2}(\varepsilon, \kappa , p) \diff s \\ 
&  + C_{7}(p)t + C_{7}(p) \int_{0}^{T} \mathbb{E} \left( \left|u^{m}(s)\right|^{2p}_{H} \right) \diff s \\
\leq & C_{4} + C_{8}T + \int_{0}^{T\wedge\tau_{m}(R)}C_{9}(\varepsilon, \beta, p) \mathbb{E} \left( \left|u^{m}(s)\right|^{2p}_{H} \right) \diff s .
\end{aligned}
\end{equation}


Substitute in the Gronwall lemma (e.g., see Theorem 1.2 in \cite{Hart})  $u(t):= \mathbb{E} \left|u^{ m}(t\wedge\tau_{m}(R))\right|_{H}^{2p}$, $\alpha(t):=C_{4} + C_{8}T$, $\beta(t):\equiv C_{9}(\varepsilon, \beta, p)$, $a:=0$, $b:=T\wedge\tau_{m}(R)$. Then for any $t\in[0,T\wedge\tau_{m}(R)]$ and $m \in \mathbb{N}\setminus \left\{0\right\}$
\begin{equation}\label{LGN}
\begin{aligned}
&\mathbb{E}\left|u^{ m}(t\wedge\tau_{m}(R))\right|_{H}^{2p} \\
\leq & C_{4} + C_{8}T + \left|\int_{0}^{t\wedge\tau_{m}(R)} \left[C_{4} + C_{8}s\right]C_{9}(\varepsilon, \beta, p) \exp\left\{\left|\int_{s}^{t\wedge\tau_{m}(R)} C_{9}(\varepsilon, \beta, p) \diff \xi\right|\right\} \diff s\right| \\
\leq &  C_{10}(\varepsilon, \beta, p, T)
\end{aligned}
\end{equation}
for some constant $C_{10}(\varepsilon, \beta, p, T) > 0$. Moreover, 
\begin{equation}\nonumber
\sup_{n\geq 1}\sup_{t\in[0,T]} \mathbb{E}\left|u^{ m}(t\wedge\tau_{m}(R))\right|_{H}^{2p} \leq C_{10}(\varepsilon, \beta, p, T)
\end{equation}
and in particular
\begin{equation}\nonumber
\sup_{n\geq 1} \mathbb{E}\left[\int_{0}^{T\wedge\tau_{m}(R)}\left|u^{ m}(s)\right|_{H}^{2p} \diff s\right] \leq C_{10}(\varepsilon, \beta, p, T) . 
\end{equation}
Due to this inequality, when $R\uparrow\infty$ the following inequality holds
\begin{equation}\label{supszac}
\sup_{n\geq 1} \mathbb{E}\left[\int_{0}^{T}\left|u^{ m}(s)\right|_{H}^{2p} \diff s\right] \leq C_{10}(\varepsilon, \beta, p, T) . 
\end{equation} 
Using (\ref{EIszac}), (\ref{EMszac}) and (\ref{supszac}) in (\ref{L*}) one gets 
\begin{equation}\nonumber
\begin{aligned}
\sup_{m\geq 1}  & \mathbb{E} \left[\left|u^{ m}(t\wedge\tau_{m}(R))\right|_{H}^{2p} \right] +  \sup_{m\geq 1}  \mathbb{E} \left[ \int_{0}^{t\wedge\tau_{m}(R)}\left[p (\alpha  -\varepsilon\delta) \right] \left|u^{ m}(s)\right|_{H}^{2p-2} \left|u^{ m}(s)\right|^{2}_{V} \diff s \right] \\
\leq & \sup_{m\geq 1}  \mathbb{E} \left[ \int_{0}^{t\wedge\tau_{m}(R)} 2p\left|u^{ m}(s)\right|_{H}^{2p-2} \left\langle u^{ m}(s); \Phi(u^{ m}(s)) \diff W^{m}(s)\right\rangle_{H} \right] \\
&+ \sup_{m\geq 1}  \mathbb{E} \left[\int_{0}^{t\wedge\tau_{m}(R)} C_{3}(\varepsilon, \beta , p) \left|u^{ m}(s)\right|_{H}^{2p} \diff s +\int_{0}^{t\wedge\tau_{m}(R)}  C_{2}(\varepsilon, \kappa , p) \diff s \right]\\
&+ \sup_{m\geq 1}  \mathbb{E} \left[M^{m}(t) \right] + \sup_{m\geq 1}  \mathbb{E} \left[ I^{m}(t) \right] \\
\leq & \sup_{m\geq 1}  \mathbb{E} \left[\int_{0}^{T} C_{3}(\varepsilon, \beta , p) \left|u^{ m}(s)\right|_{H}^{2p} \diff s + \int_{0}^{T} C_{2}(\varepsilon, \kappa , p) \diff s \right]\\
&+ \sup_{m\geq 1}  \mathbb{E} \left[M^{m}(t) \right] + \sup_{m\geq 1}  \mathbb{E} \left[ I^{m}(t) \right] \\
\leq & C_{10}(\varepsilon, \beta, p, T)C_{3}(\varepsilon, \beta , p) + C_{2}(\varepsilon, \kappa , p)T + C_{7}(p)T + C_{7}(p) \sup_{m\geq 1} \int_{0}^{T} \mathbb{E} \left( \left|u^{m}(s)\right|^{2p}_{H} \right) \diff s \\
\leq & C_{11}(\varepsilon, \beta, \kappa, p, T) + C_{7}(p) C_{10}(\varepsilon, \beta, p, T) \leq C_{12}(\varepsilon, \beta, \kappa, p, T) .
\end{aligned}
\end{equation}
Substitution in the above inequality  $p:=1$, give for any  $t\in[0,T]$
\begin{equation}\nonumber
\begin{aligned}
\sup_{m\geq 1}  & \mathbb{E} \left[\left|u^{ m}(t\wedge\tau_{m}(R))\right|_{H}^{2} \right] +  \sup_{m\geq 1}  \mathbb{E} \left[ \int_{0}^{t\wedge\tau_{m}(R)}C_{13}(\varepsilon, \alpha, \delta)  \left|u^{ m}(s)\right|^{2}_{V} \diff s \right] \leq C_{12}(\varepsilon, \beta, \kappa, 1, T), \\
\sup_{m\geq 1} &  \mathbb{E} \left[ \int_{0}^{T}C_{13}(\varepsilon, \alpha, \delta)  \left|u^{ m}(s)\right|^{2}_{V} \diff s \right] + C_{10}(\varepsilon, \beta, 1, T) \leq C_{12}(\varepsilon, \beta, \kappa, , T), \\
\sup_{m\geq 1} &  \mathbb{E} \left[ \int_{0}^{T}C_{13}(\varepsilon, \alpha, \delta)  \left|u^{ m}(s)\right|^{2}_{V} \diff s \right] \leq - C_{10}(\varepsilon, \beta, 1, T) + C_{12}(\varepsilon, \beta, \kappa, 1, T) \leq C_{14}(\varepsilon, \beta, \kappa, 1, T), \\
\sup_{m\geq 1} &  \mathbb{E} \left[ \int_{0}^{T} \left|u^{ m}(s)\right|^{2}_{V} \diff s   \leq C_{14}(\varepsilon, \beta, \kappa, T)\right].
\end{aligned}
\end{equation}
Since  $\varepsilon, \beta, \kappa, T$ are fixed, it gives (\ref{LevyC2}).

From the Burkholder lemma (e.g., see Theorem 2.3 in \cite{BDG}) for the process $M^{m}(t)$ one obtains 
\begin{equation}\label{BurkM}
\begin{aligned}
\mathbb{E} & \left[\sup_{r\in[0,t]}\left|M^{n}(r\wedge\tau_{m}(R))\right|\right] \\
\leq & C_{15}(p)\mathbb{E}\left[\left(\int_{0}^{t\wedge\tau_{m}(R)}\int_{Y}\left\{\left|u^{m}(s^{-}) + P_{m}F(s,u^{m}(s^{-});y)\right|^{2p}_{H} - \left|u^{m}(s^{-})\right|^{2p}_{H}\right\}\nu(\diff y)\diff s\right)^{\frac{1}{2}}\right]
\end{aligned}
\end{equation}
for some $C_{15}(p)>0$. Moreover, due to condition (\ref{F3}), Lemma \ref{Taylor} for some  $C_{16},C_{17},C_{18},C_{19}>0$ there holds
\begin{equation}\label{BurkFpom}
\begin{aligned}
\int_{Y} & \left(u^{m}(s^{-}) + P_{m}F(s,u^{m}(s^{-});y)\right|^{2p}_{H} - \left|u^{m}(s^{-})\right|^{2p}_{H}\nu(\diff y) \\
\leq & C_{16} + C_{17} \left|u^{m}(s^{-})\right|_{H}^{4p-4} + C_{18}\left|u^{m}(s^{-})\right|^{4p-2}_{H} + C_{19}\left|u^{m}(s^{-})\right|^{4p}_{H} .
\end{aligned}
\end{equation}
 Young's inequality in (\ref{BurkFpom}) implies
\begin{equation}\label{BurkFpom1}
\begin{aligned}
\int_{Y} & \left(u^{m}(s^{-}) + P_{m}F(s,u^{m}(s^{-});y)\right|^{2p}_{H} - \left|u^{m}(s^{-})\right|^{2p}_{H}\nu(\diff y) \\
\leq & C_{20} + C_{21} \left|u^{m}(s^{-})\right|_{H}^{4p} .
\end{aligned}
\end{equation}
for some constants $C_{20},C_{21}>0$. Therefore
\begin{equation}\label{BurkFpom2}
\begin{aligned}
& \left(\int_{0}^{t\wedge\tau_{m}(R)}\int_{Y}\left\{\left|u^{m}(s^{-}) + P_{m}F(s,u^{m}(s^{-});y)\right|^{2p}_{H} - \left|u^{m}(s^{-})\right|^{2p}_{H}\right\}\nu(\diff y)\diff s\right)^{\frac{1}{2}} \\
\leq & \sqrt{TC_{20}} + \sqrt{C_{21}} \left( \int_{0}^{t\wedge\tau_{m}(R)} \left|u^{m}(s^{-})\right|_{H}^{4p} \diff s\right)^{\frac{1}{2}}.
\end{aligned}
\end{equation}
Using (\ref{supszac}) and (\ref{BurkFpom2}) in (\ref{BurkM}), one get for some constants  $C_{22},C_{23}>0$
\begin{equation}\label{BurkFpom3}
\begin{aligned}
\mathbb{E} & \left[\sup_{r\in[0,t]}\left|M^{n}(r\wedge\tau_{m}(R))\right|\right] \leq C_{15}(p) \sqrt{TC_{20}} + C_{15}(p) \sqrt{C_{21}} \mathbb{E}\left[ \left( \int_{0}^{t\wedge\tau_{m}(R)} \left|u^{m}(s^{-})\right|_{H}^{4p} \diff s\right)^{\frac{1}{2}} \right] \\
& \leq  C_{15}(p) \sqrt{TC_{20}} + C_{15}(p) \sqrt{C_{21}} \mathbb{E}\left[\left(\sup_{s\in[0,t]} \left|u^{m}(s^{-})\right|_{H}^{2p}\right) \left( \int_{0}^{t\wedge\tau_{m}(R)} \left|u^{m}(s^{-})\right|_{H}^{2p} \diff s\right)^{\frac{1}{2}} \right] \\
& \leq  C_{15}(p) \sqrt{TC_{20}} + \frac{1}{4} \mathbb{E}\left[\left(\sup_{s\in[0,t]} \left|u^{m}(s^{-})\right|_{H}^{2p}\right)\right] + C_{22}(p) \sqrt{C_{21}} \mathbb{E}\left[ \left( \int_{0}^{t\wedge\tau_{m}(R)} \left|u^{m}(s^{-})\right|_{H}^{2p} \diff s\right)^{\frac{1}{2}} \right] \\
& \leq \frac{1}{4} \mathbb{E}\left[\left(\sup_{s\in[0,t]} \left|u^{m}(s^{-})\right|_{H}^{2p}\right)\right] + C_{23}(p) .
\end{aligned}
\end{equation}

Moreover, using Burkholder's inequality for the process $$\int_{0}^{t\wedge\tau_{m}(R)}p\left|u^{ m}(s)\right|_{L^{2}(\mathbb{R})}^{2p-2}\left\langle \Phi(u^{ m}(s)) \diff W^{m}(s), u^{ m}(s)\right\rangle_{H}$$ one obtains for some constant $C_{23}(p)>0$ 
\begin{equation}\label{PhiBurk}
\begin{aligned}
\mathbb{E} & \left( \sup_{t\in[0,T]}\int_{0}^{t\wedge\tau_{m}(R)}p\left|u^{ m}(s)\right|_{H}^{2p-2}\left\langle \Phi(u^{ m}(s)) \diff W^{m}(s), u^{ m}(s)\right\rangle_{H} \right) \\
\leq & C_{23}(p) \mathbb{E} \left\{ \left[ \int_{0}^{t\wedge\tau_{m}(R)}  p\left|u^{ m}(s)\right|_{H}^{4p-2} \left\|\Phi(u^{ m}(s))\right\|^{2}_{L_{0}^{2}(H)}  \diff s \right]^{\frac{1}{2}} \right\} \\
\end{aligned}
\end{equation}
\begin{equation} \nonumber 
\begin{aligned}	
\leq & C_{23}(p) p \mathbb{E} \left\{ \bigg[ \sup_{0 \leq s \leq T} \left|u^{ m}(s)\right|_{H}^{2p}  \int_{0}^{t\wedge\tau_{m}(R)} \left|u^{ m}(s)\right|_{H}^{2p-2} \left\|\Phi(u^{ m}(s))\right\|^{2}_{L_{0}^{2}(H)} \diff s \bigg]^{\frac{1}{2}} \right\} \\
\leq & C_{23}(p) p \mathbb{E} \left\{ \bigg[ \sup_{0 \leq s \leq T} \left|u^{ m}(s)\right|_{H}^{2p}  \int_{0}^{t\wedge\tau_{m}(R)} \left|u^{ m}(s)\right|_{H}^{2p-2} \left( C_{\Phi}\left|u^{ m}(s) \right|^{2}_{H} + 1 \right)  \diff s \bigg]^{\frac{1}{2}} \right\} \\
\leq & \frac{1}{2}\mathbb{E}\left( \sup_{0\leq s\leq T} \left| u^{ m}(s) \right|_{H}^{2p} \right) + \frac{1}{2} C_{23}(p)^{2} p^{2} \mathbb{E} \left( \int_{0}^{t\wedge\tau_{m}(R)} C_{\Phi} \sup_{0\leq s\leq \xi} \left|u^{ m}(s)\right|_{H}^{2p}\diff \xi \right) \\
& + \frac{1}{2}C_{23}(p)^{2}p^{2}1 \mathbb{E} \int_{0}^{t\wedge\tau_{m}(R)} \left|u^{ m}(s)\right|^{2p-2} \diff s \\
\leq & \frac{1}{2}\mathbb{E}\left( \sup_{0\leq s\leq T} \left| u^{ m}(s) \right|_{H}^{2p} \right) + \frac{1}{2} C_{23}(p)^{2} p^{2} \mathbb{E} \left( \int_{0}^{t\wedge\tau_{m}(R)} C_{\Phi} \sup_{0\leq s\leq \xi} \left|u^{ m}(s)\right|_{H}^{2p}\diff \xi \right) + \frac{1}{2}C_{23}(p)^{2}p^{2} \bar{C}_{p}T \\
\leq & \frac{1}{2}\mathbb{E} \left( \sup_{0\leq s\leq T} \left|u^{ m}(s)\right|_{H}^{2p} \right) + C_{15}( p)\mathbb{E}\left(\int_{0}^{t\wedge\tau_{m}(R)} \sup_{0\leq s\leq T} \left| u^{ m}(s) \right|_{H}^{2p} \diff \xi\right) + C_{16}( p, T) \\
\leq & \frac{1}{2}\mathbb{E} \left( \sup_{0\leq s\leq T} \left|u^{ m}(s)\right|_{H}^{2p} \right) + C_{24}(p,T) \mathbb{E}\left[ \sup_{0\leq s\leq T} \left| u^{ m}(s) \right|_{H}^{2p} \right]+ C_{16}(p,T) \\
\leq & C_{25}(p,T)\mathbb{E}\left[\sup_{0\leq s\leq T} \left| u^{ m}(s) \right|_{H}^{2p}\right] + C_{16}(p,T) .
\end{aligned}
\end{equation} 
Now, we have
\begin{equation}\nonumber
\begin{aligned}
\left|u^{ m}(t\wedge\tau_{m}(R))\right|_{L^{2}(\mathbb{R})}^{2p}\leq& \int_{0}^{t\wedge\tau_{m}(R)} 2p\left|u^{ m}(s)\right|_{L^{2}(\mathbb{R})}^{2p-2} \left\langle u^{ m}(s), \Phi(u^{ m}(s)) \diff W^{m}(s)\right\rangle_{H} \\
&+ \int_{0}^{t\wedge\tau_{m}(R)} C_{3}(\varepsilon, \beta , p) \left|u^{ m}(s)\right|_{L^{2}(\mathbb{R})}^{2p} \diff s + \int_{0}^{t\wedge\tau_{m}(R)} C_{2}(\varepsilon, \kappa , p) \diff s\\
&+ M^{m}(t) + I^{m}(t).
\end{aligned}
\end{equation}
Taking supremum from the r.h.s. of the above inequality, taking expectation values and using  (\ref{BurkFpom3}) and (\ref{PhiBurk}) one obtains
\begin{equation}\nonumber
\begin{aligned}
\mathbb{E} & \sup_{t\in[0,T]}  \left[\left|u^{ m}(t\wedge\tau_{m}(R))\right|_{L^{2}(\mathbb{R})}^{2p}\right]\leq \mathbb{E}\sup_{t\in[0,T]} \left[ \int_{0}^{t\wedge\tau_{m}(R)} 2p\left|u^{ m}(s)\right|_{L^{2}(\mathbb{R})}^{2p-2} \left\langle u^{ m}(s), \Phi(u^{ m}(s)) \diff W^{m}(s)\right\rangle_{H} \right] \\
&+ \mathbb{E}\sup_{t\in[0,T]} \left[ \int_{0}^{t\wedge\tau_{m}(R)} C_{3}(\varepsilon, \beta , p) \left|u^{ m}(s)\right|_{L^{2}(\mathbb{R})}^{2p} \diff s \right] + \mathbb{E}\sup_{t\in[0,T]} \left[ \int_{0}^{t\wedge\tau_{m}(R)} C_{2}(\varepsilon, \kappa , p) \diff s \right]\\
&+ \mathbb{E}\sup_{t\in[0,T]} \left[M^{m}(t)\right] + \mathbb{E}\sup_{t\in[0,T]} \left[I^{m}(t)\right] \\
\leq &+ C_{26} + C_{25}(p,T)\mathbb{E}\left[\sup_{0\leq s\leq T} \left| u^{ m}(s) \right|_{H}^{2p}\right] + C_{16}(p,T) + \mathbb{E}\sup_{t\in[0,T]} \left[ \int_{0}^{t\wedge\tau_{m}(R)} C_{3}(\varepsilon, \beta , p) \left|u^{ m}(s)\right|_{L^{2}(\mathbb{R})}^{2p} \diff s \right] \\
& + \frac{1}{4} \mathbb{E}\left[\left(\sup_{s\in[0,t]} \left|u^{m}(s^{-})\right|_{H}^{2p}\right)\right] + C_{23}(p) + \mathbb{E}\sup_{t\in[0,T]} \left[I^{m}(t)\right] \\
\leq & C_{27}(p,T) + C_{28}(p,T)\mathbb{E}\left[\sup_{0\leq s\leq T} \left| u^{ m}(s) \right|_{H}^{2p}\right] +C_{3}(\varepsilon, \beta , p) \mathbb{E}\sup_{t\in[0,T]} \left[ \int_{0}^{t\wedge\tau_{m}(R)} \left|u^{ m}(s)\right|_{L^{2}(\mathbb{R})}^{2p} \diff s \right] \\
& + \mathbb{E}\sup_{t\in[0,T]} \left[I^{m}(t)\right]. \\
\end{aligned}
\end{equation}
Inequalities (\ref{EIszac}), (\ref{supszac}) imply
\begin{equation}
\begin{aligned}
\mathbb{E}\sup_{t\in[0,T]}  & \left[\left|u^{ m}(t\wedge\tau_{m}(R))\right|_{L^{2}(\mathbb{R})}^{2p}\right] \leq C_{29}(p,T) + C_{30}(p,T) + C_{31}(p) \int_{0}^{t} \mathbb{E} \left( \left|u^{m}(s)\right|^{2p}_{H} \right) \diff s + C_{32}(\varepsilon, \beta, p, T) \\
\leq & C_{29}(p,T) + C_{30}(p,T) + C_{31}(p) C_{32}(\varepsilon, \beta, p, T) + C_{32}(\varepsilon, \beta, p, T) \leq C_{33}(\varepsilon, \beta, p, T) .
\end{aligned}
\end{equation}
Taking the limit  $R\uparrow\infty$ yields (\ref{LevyC1}).

Now, let $p\in\left[\right.\frac{1}{2}, 2+\frac{\varsigma}{2}\left.\right)\setminus\left\{2\right\}$ and let $m\in\mathbb{N}$ be arbitrary fixed. Then
\begin{equation}\nonumber
\begin{aligned}
\left|u^{m}(s)\right|^{2p}_{H} = \left( \left|u^{m}(s)\right|^{2+\frac{\varsigma}{2}}_{H}\right)^{\frac{2p}{2+\frac{\varsigma}{2}}} \leq \left(\sup_{t\in[0,T]} \left|u^{m}(s)\right|^{2+\frac{\varsigma}{2}}_{H}\right)^{\frac{2p}{2+\frac{\varsigma}{2}}}
\end{aligned}
\end{equation}
and 
\begin{equation}\nonumber
\begin{aligned}
\mathbb{E}\left[\sup_{t\in[0,T]}\left|u^{m}(t)\right|^{2p}_{H}\right] \leq & \mathbb{E}\left[\left(\sup_{t\in[0,T]} \left|u^{m}(s)\right|^{2+\frac{\varsigma}{2}}_{H}\right)^{\frac{2p}{2+\frac{\varsigma}{2}}} \right] \leq \left( \mathbb{E}\left[\sup_{t\in[0,T]} \left|u^{m}(s)\right|^{2+\frac{\varsigma}{2}}_{H} \right] \right)^{\frac{2p}{2+\frac{\varsigma}{2}}}\\
\leq & \left[C_{34}\left(4+\frac{\varsigma}{2}\right)\right]^{\frac{2p}{2+\frac{\varsigma}{2}}}.
\end{aligned}
\end{equation}
Since $m\in\mathbb{N}$ is fixed, so
\begin{equation}\nonumber
\mathbb{E}\left[\sup_{t\in[0,T]}\left|u^{m}(t)\right|^{2p}_{H}\right] \leq C_{35}(p),
\end{equation}
what finishes the proof.
\end{proof}

\begin{proof}[Proof of  Lemma \ref{L5.4}] 
For reader's convenience we cite lemmas from \cite{Mot} explicitly.
\begin{lemma}(\cite[Corollary 3.5, tightness criterium]{Mot} )\label{LevyCiasKr}
Let $\left\{X_{n}\right\}_{n\in\mathbb{N}}$ be a sequence of processes of c\`{a}dl\`{a}g type, adapted to filtration $\left\{\mathscr{F}\right\}_{t\geq 0}$ with values in  $U'$, such that
\begin{itemize}
\item[(i)] There exists a constant $C_{1}>0$, such that $\sup_{n\in\mathbb{N}}\mathbb{E}\left[\sup_{s\in[0,T]}\left|X_{n}(s)\right|_{H}\right] \leq C_{1}$;
\item[(ii)] There exists a constant  $C_{2}>0$, such that $\sup_{n\in\mathbb{N}}\mathbb{E}\left[\int_{0}^{T}\left|X_{n}\right|^{2}_{V} \diff s\right] \leq C_{2}$;
\item[(iii)] $\left\{X_{n}\right\}$ fulfils the Aldous condition in  $U'$.
\end{itemize}
Then the family of distributions  $\left\{\mathscr{L}(X_{n})\right\}$ is tight in $\mathscr{Z}$.
\end{lemma}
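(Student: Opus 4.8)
The plan is to reduce the statement to a purely deterministic compactness criterion in the space $\mathscr{Z}$, and then to feed the three hypotheses into it through Chebyshev's inequality and the standard machinery attached to the Aldous condition, exactly along the lines of \cite{Mot}. Concretely, for each $\varepsilon>0$ I want to exhibit a single compact set $K_{\varepsilon}\subset\mathscr{Z}$ with $\sup_{n}\mathbb{P}(X_{n}\notin K_{\varepsilon})\le\varepsilon$; that is precisely tightness of $\{\mathscr{L}(X_{n})\}$.

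\textbf{Deterministic step.} Recall $\mathscr{Z}=L^{2}(0,T;V)\cap L^{2}(0,T;H_{loc})\cap\mathbb{D}(0,T;U')\cap\mathbb{D}(0,T;H)$, where $L^{2}(0,T;V)$ and $\mathbb{D}(0,T;H)$ carry their weak topologies and $\mathbb{D}(0,T;U')$ the Skorokhod topology. First I would establish that a subset $K\subset\mathscr{Z}$ is relatively compact provided it is bounded in $L^{\infty}(0,T;H)$, bounded in $L^{2}(0,T;V)$, and has a uniformly vanishing Skorokhod modulus of continuity with values in $U'$, i.e.\ $\lim_{\delta\to0}\sup_{u\in K}w_{[0,T],U'}(u,\delta)=0$. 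The proof rests on the compact embeddings $V\hookrightarrow H$ (Rellich on the bounded domain) and $H\hookrightarrow U'$ (the latter dual to (U3)): from the $U'$-modulus bound together with the $U'$-bound coming from the $H$-bound, an Arzel\`a--Ascoli argument in $\mathbb{D}(0,T;U')$ produces a subsequence converging there; the $L^{\infty}(0,T;H)$-bound upgrades this to convergence in $\mathbb{D}(0,T;H)$ with the weak topology; the $L^{2}(0,T;V)$-bound yields weak convergence in $L^{2}(0,T;V)$; and interpolating the $V$-bound against the $U'$-time regularity by an Aubin--Lions--Dubinski argument gives strong convergence in $L^{2}(0,T;H_{loc})$. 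The four limits then coincide, so $K$ is relatively compact in $\mathscr{Z}$. This is the deterministic compactness lemma of \cite{Mot}, to which I would simply appeal.

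\textbf{Probabilistic step.} Fix $\varepsilon>0$. By (i) and Chebyshev's inequality there is $R_{1}>0$ with $\sup_{n}\mathbb{P}(\sup_{s\le T}|X_{n}(s)|_{H}>R_{1})\le\varepsilon/3$, and by (ii) and Chebyshev there is $R_{2}>0$ with $\sup_{n}\mathbb{P}(\int_{0}^{T}|X_{n}(s)|_{V}^{2}\,\diff s>R_{2})\le\varepsilon/3$. Since $H\hookrightarrow U'$ is continuous, (i) also yields $\sup_{n}\mathbb{E}[\sup_{s\le T}|X_{n}(s)|_{U'}]<\infty$; combined with the Aldous condition (iii), the Aldous--Kurtz criterion (as used in \cite{Mot}) gives tightness of $\{\mathscr{L}(X_{n})\}$ in $\mathbb{D}(0,T;U')$, and since relatively compact subsets of $\mathbb{D}(0,T;U')$ have uniformly vanishing Skorokhod modulus, for every $k\in\mathbb{N}$ there is $\delta_{k}>0$ with $\sup_{n}\mathbb{P}(w_{[0,T],U'}(X_{n},\delta_{k})>1/k)\le\varepsilon\,2^{-(k+2)}$. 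Put
\begin{equation}\nonumber
A_{\varepsilon}:=\bigl\{u\in\mathscr{Z}:\ \sup_{s\le T}|u(s)|_{H}\le R_{1},\ \int_{0}^{T}|u(s)|_{V}^{2}\,\diff s\le R_{2},\ w_{[0,T],U'}(u,\delta_{k})\le\tfrac1k\ \text{for all }k\in\mathbb{N}\bigr\}.
\end{equation}
By the deterministic step $K_{\varepsilon}:=\overline{A_{\varepsilon}}$ is compact in $\mathscr{Z}$, while a union bound over the three defining conditions gives $\sup_{n}\mathbb{P}(X_{n}\notin K_{\varepsilon})\le\varepsilon/3+\varepsilon/3+\sum_{k\ge1}\varepsilon\,2^{-(k+2)}\le\varepsilon$. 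As $\varepsilon$ was arbitrary, $\{\mathscr{L}(X_{n})\}$ is tight in $\mathscr{Z}$.

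\textbf{Main obstacle.} The hard part is the deterministic compactness in the mixed topology of $\mathscr{Z}$: there is no uniform bound on any time derivative of the $X_{n}$ in a reflexive space — only the one-sided, weakly metrized time regularity recorded by the $U'$-modulus — so a standard Aubin--Lions compactness does not apply and one must invoke a Dubinski-type theorem; moreover the Skorokhod extraction in $\mathbb{D}(0,T;U')$ and the weak extraction in $\mathbb{D}(0,T;H)$ have to be run simultaneously and the resulting limits identified across all four components of $\mathscr{Z}$. Once that lemma is available, the probabilistic step is a routine application of Chebyshev's inequality and of the standard consequences of the Aldous condition.
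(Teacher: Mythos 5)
The paper gives no proof of this lemma at all: it is quoted verbatim as \cite[Corollary 3.5]{Mot} and used as a black box inside the proof of Lemma \ref{L5.4}. Your reconstruction — a deterministic compactness criterion in $\mathscr{Z}$ (boundedness in $L^{\infty}(0,T;H)$ and $L^{2}(0,T;V)$ plus a uniformly vanishing Skorokhod modulus in $U'$, resting on the compact embeddings $V\hookrightarrow H$ and $H\hookrightarrow U'$), followed by Chebyshev bounds from (i)--(ii), the standard modulus estimate derived from the Aldous condition, and a union bound over the resulting compact set — is exactly the argument of the cited source and is correct in its essentials; no gap to report.
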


\begin{lemma}(\cite[ Lemma ~6.3]{Mot})\label{Aldous} Let $\left(E,\left|\cdot\right|_{E}\right)$ be a separable Banach space and let  
 $\left\{X_{n}\right\}_{n\in\mathbb{N}}$ be a sequence of random variables with values in  $E$. Let for any sequence of stopping times 
 $\left\{\tau_{n}\right\}_{n\in\mathbb{N}}$, $\tau_{n}<T$, $n\in\mathbb{N}$ and all  $n\in\mathbb{N}$ and $\vartheta > 0$
\begin{equation}\nonumber
\mathbb{E}\left[\left|X_{n}\left(\tau_{n}+\theta\right) - X_{n}(\tau_{n})\right|_{E}^{a}\right] \leq C\vartheta^{b}
\end{equation}
 holds for some $a,b,C>0$. Then the sequence $\left\{X_{n}\right\}_{n\in\mathbb{N}}$ fulfils the Aldous condition in $E$.
\end{lemma}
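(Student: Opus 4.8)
The plan is to obtain the Aldous condition directly from the assumed uniform moment bound by a single Chebyshev estimate, with no probabilistic machinery beyond that. Recall that $\{X_{n}\}_{n\in\mathbb{N}}$ is said to satisfy the Aldous condition in $E$ if for all $\varepsilon>0$ and $\eta>0$ there exists $\delta>0$ such that, for every sequence of stopping times $\{\tau_{n}\}_{n\in\mathbb{N}}$ with $\tau_{n}\le T$,
\[
\sup_{n\in\mathbb{N}}\ \sup_{0\le\vartheta\le\delta}\ \mathbb{P}\left(\left|X_{n}\big((\tau_{n}+\vartheta)\wedge T\big)-X_{n}(\tau_{n})\right|_{E}\ge\eta\right)\le\varepsilon .
\]
Thus the task reduces to producing, from $\varepsilon$ and $\eta$ alone, a threshold $\delta$ that works simultaneously for all $n$ and all admissible stopping-time sequences.

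First I would fix $\varepsilon,\eta>0$ and an arbitrary sequence $\{\tau_{n}\}$ of stopping times bounded by $T$. For each $n$ and each $\vartheta>0$, applying the Chebyshev inequality to the nonnegative variable $\left|X_{n}(\tau_{n}+\vartheta)-X_{n}(\tau_{n})\right|_{E}^{a}$ gives
\[
\mathbb{P}\left(\left|X_{n}(\tau_{n}+\vartheta)-X_{n}(\tau_{n})\right|_{E}\ge\eta\right)
=\mathbb{P}\left(\left|X_{n}(\tau_{n}+\vartheta)-X_{n}(\tau_{n})\right|_{E}^{a}\ge\eta^{a}\right)
\le\frac{1}{\eta^{a}}\,\mathbb{E}\left[\left|X_{n}(\tau_{n}+\vartheta)-X_{n}(\tau_{n})\right|_{E}^{a}\right].
\]
Inserting the hypothesis bounds the right-hand side by $C\vartheta^{b}/\eta^{a}$, a quantity independent of $n$ and nondecreasing in $\vartheta$. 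Consequently, for every $\vartheta\in[0,\delta]$ the probability is at most $C\delta^{b}/\eta^{a}$, and taking the supremum over $n\in\mathbb{N}$ and over $\vartheta\le\delta$ preserves this single bound.

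It then remains only to choose $\delta$ so that $C\delta^{b}/\eta^{a}\le\varepsilon$; the explicit value $\delta:=\left(\varepsilon\eta^{a}/C\right)^{1/b}$ works, and it depends solely on $\varepsilon,\eta$ and the fixed constants $a,b,C$, not on the particular stopping times. This is exactly the uniformity demanded by the Aldous condition, so the conclusion follows. There is no real obstacle in this argument: it is a one-line Markov-type estimate followed by the choice of $\delta$. The only point meriting mild care is to read the hypothesis as holding with the \emph{same} constants $a,b,C$ for every stopping-time sequence and uniformly in $n$ --- which is the intended meaning of the phrasing ``for any sequence of stopping times $\{\tau_{n}\}$ \dots\ holds for some $a,b,C>0$'' --- since it is precisely this uniformity that lets the resulting $\delta$ be chosen independently of $n$ and of $\{\tau_{n}\}$; one also interprets $\tau_{n}+\vartheta$ as truncated at $T$ whenever it would otherwise exceed the horizon.
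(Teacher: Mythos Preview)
Your argument is correct: the Aldous condition follows from the assumed uniform moment bound by a single application of Chebyshev's inequality and an explicit choice of $\delta$, exactly as you wrote. Note, however, that the paper does not supply its own proof of this lemma; it is quoted verbatim as \cite[Lemma~6.3]{Mot} and used as a black box, so there is no ``paper's proof'' to compare against---your Chebyshev argument is in fact the standard proof one finds in the cited reference.
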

Let us note, that due to Lemma
 \ref{L5.3} the process $u^{m}(t)$ fulfils conditions  (i) and (ii) from Lemma  \ref{LevyCiasKr} for any $m\in\mathbb{N}$. Then it is sufficient to show that for any  $m\in\mathbb{N}$, $u^{m}(t)$ fulfils Aldous condition. 
We have
\begin{equation}\nonumber
\begin{aligned}
u^{m}(t) = & P_{m}u_{0}(t) - \int_{0}^{t} u^{m}_{3x}(s) \diff s - \int_{0}^{t} u^{m}(s)u^{m}_{x}(s) \diff s + \int_{0}^{t}\int_{Y}P_{m}F(s,u^{m}(s^{-});y)\tilde{\eta}(\diff s, \diff y) \\
& + \int_{0}^{t} P_{n}\Phi(s,u^{m}(s))\diff W(s) .
\end{aligned}
\end{equation}
We will show that each of terms in the above equation fulfils assumptions of the Lemma \ref{Aldous}. 
Let $\vartheta > 0$ and let $\left\{\tau_{m}\right\}_{m\in\mathbb{N}}$ be a sequence of stopping times such that  $\tau_{m}<T$, $m\in\mathbb{N}$. Since  $V \subset H \subset V_{3}'\subset V' \subset U'$, so
\begin{equation}
\begin{aligned}
\mathbb{E} & \left[ \left|u^{m}_{3x}(\tau_{m}+\vartheta) - u^{m}_{3x}(\tau_{m})\right|_{U'} \right] =  \mathbb{E} \left[\left|\int_{\tau_{m}}^{\tau_{m}+\vartheta} u^{m}_{3x}(s) \diff s \right|_{U'} \right] \leq \mathbb{E} \left[\left|\int_{\tau_{m}}^{\tau_{m}+\vartheta} u^{m}_{3x}(s) \diff s \right|_{V_{3}'} \right] \\
\leq & C_{1} \mathbb{E} \left[\left|\int_{\tau_{m}}^{\tau_{m}+\vartheta} u^{m}(s) \diff s \right|_{H} \right] \leq C_{1}C_{2} \mathbb{E} \left[\left|\int_{\tau_{m}}^{\tau_{m}+\vartheta} u^{m}(s) \diff s \right|_{V} \right] \leq C_{1}C_{2}\mathbb{E} \left[\int_{\tau_{m}}^{\tau_{m}+\vartheta} \left|u^{m}(s)\right|_{V} \diff s  \right] \\
\leq & C_{1}C_{2}\mathbb{E} \left[\int_{0}^{T} \vartheta^{\frac{1}{2}} \left|u^{m}(s)\right|_{V} \diff s  \right] \leq C_{1}C_{2} \mathbb{E} \left[\int_{0}^{T} \vartheta^{\frac{1}{2}} \tilde{C}_{2} \diff s\right] \leq C_{1}C_{2}\tilde(C)_{2}\vartheta^{\frac{1}{2}} = C_{3}^{1}\vartheta^{\frac{1}{2}},
\end{aligned}
\end{equation}
then $u_{3x}^{m}(t)$ fulfils assumptions of Lemma \ref{Aldous} for $a:=1$ and $b:=\frac{1}{2}$ with the norm $\left|\cdot\right|_{U'}$. 

Similarly
\begin{equation}
\begin{aligned}
\mathbb{E} & \left[ \left|u^{m}(\tau_{m}+\vartheta)u^{m}_{x}(\tau_{m}+\vartheta) - u^{m}(\tau_{m}+\vartheta)u^{m}_{x}(\tau_{m}+\vartheta)\right|_{U'} \right] =  \mathbb{E} \left[\left|\int_{\tau_{m}}^{\tau_{m}+\vartheta} u^{m}(s)u^{m}_{x}(s) \diff s \right|_{U'} \right]\\
\leq & \mathbb{E} \left[\left|\int_{\tau_{m}}^{\tau_{m}+\vartheta} \!\! u^{m}(s)u^{m}_{x}(s) \diff s \right|_{V'} \right] \leq \mathbb{E} \left[\int_{\tau_{m}}^{\tau_{m}+\vartheta} \!\! \left| u^{m}(s)u^{m}_{x}(s)\right|_{V'} \diff s  \right] \leq \frac{1}{2}C_{4} \mathbb{E} \left[\int_{\tau_{m}}^{\tau_{m}+\vartheta} \!\! \left|\left(u^{m}(s)\right)^{2} \right|_{H} \! \diff s \right] \\
\leq & \frac{1}{2}C_{4}C_{5}C_{6} \mathbb{E} \left[\int_{\tau_{m}}^{\tau_{m}+\vartheta} \left|\left(u^{m}(s)\right) \right|^{\frac{3}{2}}_{V}\left|\left(u^{m}(s)\right) \right|^{\frac{1}{2}}_{V} \diff s \right] \leq \frac{1}{2}C_{4}C_{5}C_{6} \mathbb{E} \left[\int_{0}^{T} \vartheta^{\frac{1}{2}}\left|\left(u^{m}(s)\right) \right|^{\frac{3}{2}}_{V}\left|\left(u^{m}(s)\right) \right|^{\frac{1}{2}}_{V} \diff s \right]   \\
\leq &  \frac{1}{2}C_{4}C_{5}C_{6}\mathbb{E} \left[\int_{0}^{T} \vartheta^{\frac{1}{2}}\tilde{C}_{2} \diff s\right] = \frac{1}{2}C_{4}C_{5}C_{6} \vartheta^{\frac{1}{2}}\tilde{C}_{2}T \leq C_{7}^{1}\vartheta^{\frac{1}{2}}.
\end{aligned}
\end{equation}
Therefore $u^{m}(t)u_{x}^{m}(t)$ fulfils assumptions of Lemma \ref{Aldous} for $a:=1$ and $b:=\frac{1}{2}$ with the norm $\left|\cdot\right|_{U'}$. 
In the case of all other terms the result from\cite{Mot} is used.
\begin{lemma}(\cite[p.~23]{Mot})
\begin{itemize}
\item[(i)] Let $F:[0,T]\times H\times Y\rightarrow H$ fufils conditions (F1)-(F4). Then the process $P_{m}F(s,u^{m};y)$ fulfils assumptions of Lemma  \ref{Aldous} for $a:=2$ and $b:=1$ with the norm $\left|\cdot\right|_{U'}$.
\item[(ii)] Let $\Phi:[0,T]\times V\rightarrow L_{2}^{0}(L^{2}(\mathbb{R})$ fufils conditions  (P1)-(P4). Then the process $P_{m}\Phi(s,u^{m}(s))$ fulfils assumptions of Lemma \ref{Aldous} for $a:=1$ and $b:=1$  with the norm  $\left|\cdot\right|_{U'}$.
\end{itemize}
\end{lemma}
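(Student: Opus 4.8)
The plan is to verify the Aldous condition of Lemma~\ref{Aldous} for the two stochastic-integral processes built from the integrands in the statement, namely
\[
J^{m}(t):=\int_{0}^{t}\!\int_{Y}P_{m}F(s,u^{m}(s^{-});y)\,\tilde{\eta}(\diff s,\diff y),\qquad I^{m}(t):=\int_{0}^{t}P_{m}\Phi(s,u^{m}(s))\,\diff W(s).
\]
Fix an arbitrary sequence of stopping times $\tau_{m}<T$ and an arbitrary $\vartheta>0$; the bound we aim at must be uniform in $m$, which is why the a priori estimates of Lemma~\ref{L5.3} (which are uniform in $m$) are the right input.

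The first step is a reduction of the norm. From (U1)--(U3) one has the Gelfand-type chain $U\subset V_{m}\subset H\cong H'\subset V_{m}'\subset U'$, hence $\left|\cdot\right|_{U'}\le c\left|\cdot\right|_{H}$, and since $P_{m}$ is an $H$-orthogonal projection $\left|P_{m}g\right|_{H}\le\left|g\right|_{H}$; thus it is enough to estimate the increments $J^{m}(\tau_{m}+\vartheta)-J^{m}(\tau_{m})$ and $I^{m}(\tau_{m}+\vartheta)-I^{m}(\tau_{m})$ in the norm of $H$.

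For $J^{m}$ I would apply the isometry for the compensated Poisson integral over the random interval $[\tau_{m},\tau_{m}+\vartheta]$, legitimate because $\tau_{m}$ and $\tau_{m}+\vartheta$ are bounded stopping times and the integrand is predictable and square-integrable, to get
\[
\mathbb{E}\left[\left|J^{m}(\tau_{m}+\vartheta)-J^{m}(\tau_{m})\right|_{H}^{2}\right]=\mathbb{E}\left[\int_{\tau_{m}}^{\tau_{m}+\vartheta}\!\int_{Y}\left|P_{m}F(s,u^{m}(s^{-});y)\right|_{H}^{2}\,\nu(\diff y)\,\diff s\right].
\]
Dropping $P_{m}$ and using (\ref{F3}) with $p=2$, the right-hand side is at most $C_{2}\,\mathbb{E}[\int_{\tau_{m}}^{\tau_{m}+\vartheta}(1+\left|u^{m}(s)\right|_{H}^{2})\,\diff s]\le C_{2}\,(1+\mathbb{E}[\sup_{s\le T}\left|u^{m}(s)\right|_{H}^{2}])\,\vartheta\le C_{2}(1+\tilde{C}_{1}(1))\,\vartheta$ by (\ref{LevyC1}) with $p=1$, which is the Aldous condition with $a=2$, $b=1$. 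For $I^{m}$ the scheme is identical, with the It\^o isometry for the cylindrical Wiener integral in place of the Poisson one, reducing the increment to $\mathbb{E}[\int_{\tau_{m}}^{\tau_{m}+\vartheta}\|P_{m}\Phi(s,u^{m}(s))\|_{L_{0}^{2}(L^{2}(\mathbb{R}))}^{2}\,\diff s]$; the crucial point is that (\ref{P3}) is only \emph{linear} in $\left|u^{m}\right|_{V}$, so that term is controlled by Cauchy--Schwarz on the length-$\vartheta$ window, $\int_{\tau_{m}}^{\tau_{m}+\vartheta}\left|u^{m}(s)\right|_{V}\,\diff s\le\vartheta^{1/2}(\int_{0}^{T}\left|u^{m}(s)\right|_{V}^{2}\,\diff s)^{1/2}$, with expectation $\le\vartheta^{1/2}\tilde{C}_{2}^{1/2}$ by (\ref{LevyC2}) and Jensen, while the $\left|u^{m}\right|_{H}$-term and the constant give $O(\vartheta)$ by (\ref{LevyC1}); with $\vartheta\le T$ this yields an increment bound of the form required in Lemma~\ref{Aldous}.

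The step I expect to be the main obstacle is exactly the use of the two stochastic isometries over the \emph{random} window $[\tau_{m},\tau_{m}+\vartheta]$ instead of a deterministic one: this must be justified through the predictability of the stochastic interval together with the $L^{2}$-integrability of the integrands $P_{m}F(\cdot,u^{m}(\cdot^{-});\cdot)$ and $P_{m}\Phi(\cdot,u^{m}(\cdot))$, which is precisely what (\ref{F3}), (\ref{P3}) and Lemma~\ref{L5.3} guarantee. The rest is the routine bookkeeping carried out in \cite{Mot}.
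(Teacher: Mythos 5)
Your argument is sound, and it is worth noting that the paper itself does not prove this lemma at all --- it is invoked purely by citation to \cite{Mot}, so you are supplying the argument the authors delegate. Your two key steps are exactly the right ones: reduce $\left|\cdot\right|_{U'}$ to $\left|\cdot\right|_{H}$ via the Gelfand chain and the contractivity of $P_{m}$ on $H$, then apply the $L^{2}$-isometries for the compensated Poisson and Wiener integrals over the stochastic interval $[\tau_{m},\tau_{m}+\vartheta]$ (justified by optional stopping for the bounded stopping times $\tau_{m}$, $\tau_{m}+\vartheta$ and the square-integrability furnished by (\ref{F3}) with $p=2$, (\ref{P3}) and Lemma \ref{L5.3}). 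The resulting bounds $C\vartheta(1+\mathbb{E}\sup_{s}|u^{m}(s)|_{H}^{2})$ for the jump term and $C(\vartheta+\vartheta^{1/2})$ for the Wiener term are uniform in $m$ by (\ref{LevyC1}) with $p=1$ and (\ref{LevyC2}), which is what the Aldous condition needs. You also correctly spotted that (\ref{P3}) here grows only linearly in $|u|_{V}$ (unlike the quadratic $H$-norm growth in Motyl's setting), so the Cauchy--Schwarz step on the length-$\vartheta$ window is genuinely necessary.

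One small discrepancy: for part (ii) your estimate yields the Aldous condition with $a=2$, $b=\tfrac12$ (or, after Cauchy--Schwarz in $\omega$, $a=1$, $b=\tfrac14$), not the $a=1$, $b=1$ asserted in the statement. Since Lemma \ref{Aldous} only requires \emph{some} $a,b,C>0$, this changes nothing downstream, but you should either note that the advertised exponents are not what your route produces, or observe that they would follow only under a quadratic-in-$|u|_{H}$ growth condition of the kind assumed in \cite{Mot}.
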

Then due to  Lemma \ref{Aldous} the sequence $\left\{u^{m}(t)\right\}$ fulfils the Aldous condition in the space  $U'$, what finishes the proof.
\end{proof}

\appendix
\section*{Appendix A: Compensated time homogeneous Poisson random measure}\renewcommand{\thesection}{A} 

The following definition is cited from \cite{Mot}  (see also \cite{PeZa}).

Let $(\Omega,\mathcal{F},\mathbb{P}) $ be a complete probability space with filtration $\mathbb{F}:=(\mathcal{F})_{t\ge 0}$. 

\begin{definition} \label{a1}
Let $(Y,\mathcal{Y})$ be a measurable space. A {\tt  time homogeneous Poisson random measure} $\eta$ on $(Y,\mathcal{Y})$ over  $(\Omega,\mathcal{F},\mathbb{F},\mathbb{P})$ is a measurable function such that 
\begin{itemize}
\item[(i)] for all $B\in \mathcal{B}(\mathbb{R}_{+}) \otimes \mathcal{Y}, \eta(B) :=i_B \circ \eta : \Omega \to \bar{\mathbb{N}}$ is a Poisson  random measure with parameter $\mathbb{E}[\eta(B)]$;
\item[(ii)] $\eta$ is independently scattered, i.e.\ if the sets $B_j\subset \mathcal{B}(\mathbb{R}_{+}) \otimes \mathcal{Y}$, $j=1,\ldots ,n$, are disjoint then the random variables $\eta(B_j)$, $j=1,\ldots ,n$, are independent.
\item[(iii)] For all $U\in \mathcal{Y}$ the $\bar{\mathbb{N}}$-valued process $(N(t,U)_{t\ge 0}$ defined by 
$$ N(t,U):= \eta((0,t]\times U), \quad t\ge 0 $$
is $\mathbb{F}$-adapted and its increments are independent of the past, i.e.\ if $t>s\ge 0$, then $N(t,U)-N(s,U) = \eta((s,t]\times U)$ is independent on $\mathcal{F}_{s}$.
\end{itemize}

If $\eta$ is a time homogeneous Poisson random measure then the formula 
$$ \nu(A) := \mathbb{E} [\eta(0,1]\times A)], \quad A\in \mathcal{Y}  $$
defines a measure on $(Y,\mathcal{Y})$ called an {\tt intensity measure} of $\eta$. Moreover, for all $T<\infty$ and all $A\in \mathcal{Y}$ such that $\mathbb{E} [\eta(0,1]\times A)]<\infty$, the $\mathbb{R}$-valued process ${\tilde{N}(t,A)}_{t\in (0,T]}$ defined by 
$$ \tilde{N}(t,A) := \eta((0,T]\times A)- t\,\nu(A), \quad t\in (0,T] $$
is an integrable martingale on $(\Omega,\mathcal{F},\mathbb{F},\mathbb{P})$. The random measure $l \otimes\nu$ on $\mathcal{B}(\mathbb{R}_{+})\otimes \mathcal{Y}$, where $l$ stands for the Lebesgue measure, is called an {\tt compensator} of $\eta$ and the difference between a time homogeneous Poisson random measure $\eta$ and its compensator, i.e.
$$ \tilde{\eta} := \eta- l\otimes \nu,  $$
is called a {\tt compensated  time homogeneous Poisson random measure}.

\end{definition}

\end{document}